\documentclass[12pt]{amsart}
\usepackage{amsmath}
\usepackage{amscd}
\usepackage{amssymb}
\usepackage{amsfonts}
\usepackage{graphicx}
\usepackage{wrapfig}
\usepackage{amsthm}
\usepackage{euscript}
\usepackage[utf8]{inputenc}
\usepackage[english]{babel}
\usepackage{pgf,tikz}
\usepackage[colorlinks, citecolor=blue]{hyperref}

\newcommand{\de}{\partial}

\newcommand{\Ric}{\mathrm{Ric}}
\newcommand{\ov}[1]{\overline{#1}}

\newcommand{\vp}{\varphi}

\newcommand{\diam}{\mathrm{diam}}

\newcommand{\p}{\partial}

\renewcommand{\leq}{\leqslant}
\renewcommand{\geq}{\geqslant}
\renewcommand{\le}{\leqslant}
\renewcommand{\ge}{\geqslant}

\newcommand{\be}{\begin{equation}}
\newcommand{\ee}{\end{equation}}

\newcommand{\R}{\mathbb{R}}

\newcommand{\Hess}{\mathrm{Hess}}

\newcommand{\D}{\nabla}

\def\e{\varepsilon}

\def\O{\Omega}
\def\S{{\mathbb S}}
\def\<{\langle}
\def\>{\rangle}

\setlength{\oddsidemargin}{0.in}
\setlength{\evensidemargin}{0.in}
\setlength{\textwidth}{6.46in}
\setlength{\textheight}{8.8in}

\newtheorem{theorem}{Theorem}[section]
\newtheorem{lemma}[theorem]{Lemma}
\newtheorem{corollary}[theorem]{Corollary}

\theoremstyle{definition}

\theoremstyle{remark}
\newtheorem{remark}[theorem]{Remark}

\numberwithin{equation}{section}

\usepackage[utf8]{inputenc}
\usepackage{tikz}
\usetikzlibrary{patterns}
\pgfdeclarepatternformonly{my dots}{\pgfqpoint{-1pt}{-1pt}}{\pgfqpoint{2.5pt}{2.5pt}}{\pgfqpoint{6pt}{6pt}}%
{
	\pgfpathcircle{\pgfqpoint{0pt}{0pt}}{.5pt}
	\pgfpathcircle{\pgfqpoint{7pt}{7pt}}{.5pt}
	\pgfusepath{fill}
}
\begin{document}

\title[Spectral comparison and splitting theorems]
{Spectral comparison and splitting theorems for the infinity-Bakry-Emery Ricci curvature}

\author{Jia-Yong Wu}
\address{Department of Mathematics and Newtouch Center for Mathematics, Shanghai University, Shanghai 200444, China}
\email{wujiayong@shu.edu.cn}

\subjclass[2010]{Primary 53C21; Secondary 53C24, 53C42}
\dedicatory{}
\date{\today}
\keywords{Diameter estimate; volume estimate; splitting theorem;
Bakry-\'Emery Ricci curvature; Shrinking gradient Ricci soliton}

\begin{abstract}
In this paper, we prove the diameter comparison, the global weighted volume comparison and
the splitting theorem in weighted manifolds when the infinity-Bakry-Emery
Ricci curvature has a lower bound in the spectrum sense. Our results extend
Antonelli-Xu's spectral Bonnet-Myers and Bishop-Gromov theorems, and
Antonelli-Pozzetta-Xu's spectral splitting theorem to weighted manifolds.
Our results are also some supplements of Chu-Hao's spectral diameter and
global volume comparisons, and Yeung's spectral splitting theorem in weighted
manifolds.
\end{abstract}
\maketitle

\section{Introduction}\label{Int1}
In \cite{AX17} Antonelli-Xu proved the diameter and global volume comparisons
in manifolds when the Ricci curvature has a positive lower bound in the spectrum
sense. Their results generalize the classical Bonnet-Myers and Bishop-Gromov
volume comparisons. Later, Chu-Hao \cite{CH} extended Antonelli-Xu's results
to weighted manifolds. To state their results, we need to introduce some concepts.
Recall that an $n$-dimensional weighted manifold, denoted by $(M,g,e^{-f}dv)$,
is an $n$-dimensional Riemannian manifold $(M,g)$ together with a weighted
measure $e^{-f}dv$ for some $f\in C^\infty(M)$, where $dv$ is the Riemannian
volume element on $M$. On $(M,g,e^{-f}dv)$, for a finite constant $N$, the
$N$-Bakry-Emery Ricci curvature introduced by Bakry-Emery \cite{BE} is defined by
\[
\Ric_f^N:=\Ric+\Hess\,f-\frac{df\otimes df}{N},
\]
where $\Ric$ is the Ricci curvature of $(M,g)$ and $\Hess$ is the Hessian
with respect to $g$. At a point $x\in M$, the smallest eigenvalue of the
$N$-Bakry-Emery Ricci curvature is defined by
\begin{equation}\label{RicfNsp}
\Ric_f^N(x):=\inf\left\{\Ric_f^N(v,v)\,\, |\,\,v\in T_xM,\, |v|=1\right\},
\end{equation}
which is locally Lipschitz on $M$. On $(M,g,e^{-f}dv)$, the $f$-Laplacian
$\Delta_f$ is defined by
\[
\Delta_f u:=\Delta u-g(\D f,\D u)
\]
for $u\in C^2(M)$, which is self-adjoint with respect to the weighted measure $e^{-f}dv$.

On $(M,g,e^{-f}dv)$, for constants $\gamma\ge 0$ and $\lambda\in\R$,
we say that
\[
\lambda_1(-\gamma\Delta_f+\Ric^N_f)\ge\lambda
\]
if any of the two following equivalent conditions holds:
\begin{enumerate}
\item [(1)]
$\int_M\left(\gamma|\D\vp|^2+\Ric_f^N\vp^2\right)e^{-f}dv\ge\lambda\int_M \vp^2e^{-f}dv$
for any $\vp\in C_c^1(M)$,
\item [(2)] there exist a constant $\alpha\in (0,1)$ and a positive function
$u\in C^{2,\alpha}(M)$ such that $u\,\Ric^N_f-\gamma \Delta_f u\ge\lambda u$ on $M$.
\end{enumerate}

When $N=\infty$, the $N$-Bakry-Emery Ricci curvature becomes the $\infty$-Bakry-Emery
Ricci curvature (also called the infinity-Bakry-Emery Ricci curvature)
\[
\Ric_f=\Ric+\Hess\,f.
\]
$\Ric_f$ is linked with the $f$-Laplacian via the generalized Bochner formula
\[
\Delta_f|\D u|^2=2|\Hess\,u|^2+2g(\D u,\D\Delta_f u)+2\Ric_f(\D u, \D u)
\]
for $u\in C^\infty(M)$, which plays a basic role in the comparison geometry
of weighted manifolds (see e.g. \cite{WW09}). $\Ric_f$ is also related to
the following gradient Ricci soliton
\[
\Ric_f=\lambda\, g,
\]
where $\lambda$ is a real constant, and it plays an important role in studying
the singularities of the Ricci flow \cite{Ham}. A gradient Ricci soliton is
called shrinking, steady, or expanding, if $\lambda$ is positive, zero, or
negative, respectively.

 Analogous to \eqref{RicfNsp},
one defines the smallest eigenvalue of $\Ric_f$ at $x\in M$ by
\[
\Ric_f(x):= \inf\left\{\Ric_f(v,v)\,\, |\,\, v\in T_xM,\,  |v|=1\right\}.
\]
For constants $\gamma\ge 0$ and $\lambda\in \R$, we say that
\[
\lambda_1(-\gamma\Delta_f+\Ric_f)\ge\lambda
\]
on $(M,g,e^{-f}dv)$ if any of the two following equivalent conditions holds:
\begin{enumerate}
\item [(1)]
$\int_M\left(\gamma|\D\vp|^2+\Ric_f\vp^2\right)e^{-f}dv\ge\lambda\int_M \vp^2e^{-f}dv$
for any $\vp\in C_c^1(M)$,
\item [(2)] there exist a constant $\alpha\in (0,1)$ and a positive function
$u\in C^{2,\alpha}(M)$ such that $u\,\Ric_f-\gamma \Delta_f u\ge \lambda u$
on $M$.
\end{enumerate}

Chu-Hao \cite{CH} generalized  Antonelli-Xu's spectral diameter and global
volume comparisons \cite{AX17} to weighted manifolds when the $N$-Bakry-Emery
Ricci curvature has a positive lower bound in the spectrum sense. They showed
that if there exist a constant $\lambda>0$ and a smooth function $u>0$ on an
$n$-dimensional ($n\ge 3$) weighted manifold $(M,g,e^{-f}dv)$ with
$0<\inf_Mu\le\sup_M u<\infty$, such that
\[
u\,\Ric_f^N-\gamma\Delta_fu\ge(n-1)\lambda u
\]
for two constants $N$ and $\gamma$ satisfying
$N\in(-\infty,-(n-1))\cup(0,+\infty)$ and $0\le\gamma\le\frac{N+n-1}{N+n-2}$,
and if $F:=\|f\|_{C^0(M)}<\infty$,
then one has the diameter comparison
\[
\diam(M,g) \leq
\begin{cases}
\left(\frac{\sup_Mu}{\inf_Mu}\right)^{\frac{N+n-3}{N+n-1}\gamma}
\cdot\sqrt{\frac{N+n-1}{n-1}} \cdot \frac{\pi}{\sqrt{\lambda}} & \mbox{\text{when $N>0$}}, \\[2mm]
\left(\frac{\sup_Mu}{\inf_Mu}\right)^{\frac{n-3}{n-1}\gamma}
\cdot e^{\frac{2}{n-1}F} \cdot \frac{\pi}{\sqrt{\lambda}} & \mbox{\text{when $N<-(n-1)$}},
\end{cases}
\]
and the global weighted volume comparison
\[
|M|_f\le e^{\frac{(n+1)(3n-1)}{n(n-1)}F}\cdot\lambda^{-\frac n2}|\S^n|,
\]
where $|M|_f:=\int_M e^{-f}dv$ is the weighted volume of the manifold $M$ and
$|\S^n|$ is the Riemannian volume of the unit sphere $\S^n$ in $\R^{n+1}$.
When $f=0$, the above weighted notations return to Riemannian cases and Chu-Hao's
results recover Antonelli-Xu's results \cite{AX17} by taking $F=0$ and $N=0$.

Notice that Chu-Hao's results require $N<\infty$. One may naturally ask if there exists
similar Chu-Hao's results in weighted manifolds when $\Ric_f$ has a positive
lower bound in the spectrum sense. In this paper we give a positive answer.
\begin{theorem}\label{result}
Let $(M,g,e^{-f}dv)$ be a complete $n$-dimensional ($n\geq3$) weighted manifold
and $\gamma$ be a constant such that $0\le \gamma\le 1$. Assume that there exists
a positive function $u\in C^{\infty}(M)$ such that
\[
u\,\Ric_f-\gamma\Delta_fu \geq (n-1)\lambda u
\]
for some constant $\lambda>0$. If $0<\inf_Mu\le\sup_Mu<\infty$ and
$F=\|f\|_{C^0(M)}<\infty$, then the following results hold:
\begin{enumerate}
\item Diameter comparison:
\[
\diam(M,g)\le\left(\frac{\sup_Mu}{\inf_Mu}\right)^{\frac{n-3}{n-1}\gamma}
\cdot e^{\frac{f_{max}-f_{min}}{n-1}}\cdot\frac{\pi}{\sqrt{\lambda}}.
\]
\item Global weighted volume comparison:
\[
|M|_f\le\exp\left\{\frac{n^2+2n-1}{n(n-1)}f_{max}-\frac{2n}{n-1}f_{min}\right\}
\cdot\lambda^{-\frac n2}|\S^n|.
\]
\end{enumerate}
Here $f_{max}:=\sup_{x\in M} f(x)$ and $f_{min}:=\inf_{x\in M} f(x)$.
\end{theorem}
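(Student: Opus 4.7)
The plan is to adapt the Antonelli-Xu--Chu-Hao scheme to the borderline case $N=\infty$ by running the second variation of arc length directly against $\Ric_f$ and paying for the passage from $\Ric$ to $\Ric_f$ through integration by parts of the $f$-terms along the geodesic, in place of the ``virtual dimension'' trick used when $N<\infty$.

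For part (1), let $\sigma:[0,L]\to M$ be a unit-speed minimizing geodesic realizing $L=\diam(M,g)$. The index inequality with $n-1$ parallel orthonormal fields normal to $\dot\sigma$ gives
\[
\int_0^L\phi^2\,\Ric(\dot\sigma,\dot\sigma)\,dt\le(n-1)\int_0^L(\phi')^2\,dt
\]
for every $\phi\in C^1_c(0,L)$. Substituting $\Ric(\dot\sigma,\dot\sigma)=\Ric_f(\dot\sigma,\dot\sigma)-(f\circ\sigma)''$ and integrating the $(f\circ\sigma)''$-term by parts against $\phi^2$ converts this into an inequality on $\Ric_f$ plus first-order $f$-cross terms. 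I would then insert the test profile $\phi(t)=(u\circ\sigma)^{a}(t)\,h(t)$ with the exponent $a$ tuned to match the target power of $\sup u/\inf u$, and invoke the pointwise spectral inequality $\Ric_f(\dot\sigma,\dot\sigma)\geq(n-1)\lambda+\gamma\Delta_f u/u$ along $\sigma$. A Young-type absorption (using the constraint $\gamma\leq 1$) then drops the resulting $(u\circ\sigma)'$, $(f\circ\sigma)'$ and cross terms into the $(h')^2$ budget on the right. Choosing $h(t)=\sin(\pi t/L)$ and comparing with the Wirtinger eigenvalue $\pi^2/L^2$ yields
\[
(n-1)\lambda\le(n-1)\left(\frac{\sup u}{\inf u}\right)^{\frac{2\gamma(n-3)}{n-1}}e^{\frac{2(f_{max}-f_{min})}{n-1}}\cdot\frac{\pi^2}{L^2},
\]
which rearranges to the stated diameter bound. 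For part (2), I would combine the diameter bound with a weighted Bishop-Gromov estimate: writing $|M|_f=\int_0^{\diam(M,g)}\int_{\S^{n-1}}e^{-f(\exp_p(t\theta))}\,\mathcal{J}(t,\theta)\,d\theta\,dt$ in polar coordinates around a base point $p$, the Riccati ODE for the $f$-mean curvature of geodesic spheres, together with the pointwise spectral lower bound on $\Ric_f$, yields a Sturm comparison of the weighted Jacobian against the round model of curvature $\lambda$; integrating out to $t=\diam(M,g)$ and converting between weighted and Riemannian volumes via $e^{-f_{max}}\le e^{-f}\le e^{-f_{min}}$ produces the stated exponent.

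The main obstacle is that $\Delta_f u$ evaluated at $\sigma(t)$ is not a genuinely one-dimensional quantity: it mixes the radial Hessian $(u\circ\sigma)''$ with the trace of $\Hess u$ in directions orthogonal to $\dot\sigma$. When $N<\infty$ this orthogonal piece is absorbed into the virtual dimension; at $N=\infty$ that buffer is unavailable, so the integration by parts in the spectral step must be organised so that only the boundary values of $u$ (controlled by $\sup u$ and $\inf u$) and the extremes of $f$ (entering as the difference $f_{max}-f_{min}$ rather than as $\|f\|_{C^0}$) survive in the final constants. This is what forces the restriction $\gamma\leq 1$ and what pins down the sharp exponents $(n-3)/(n-1)$ and $1/(n-1)$ appearing in the statement.
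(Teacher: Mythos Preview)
Your proposal has a genuine gap, and the obstacle you flag at the end is not a technical wrinkle but the reason the geodesic approach fails. Along a minimizing geodesic $\sigma$ the spectral hypothesis only gives
\[
\Ric(\dot\sigma,\dot\sigma)\ \ge\ (n-1)\lambda+\gamma\,\frac{\Delta_f u}{u}-(f\circ\sigma)'',
\]
and $\Delta_f u(\sigma(t))=(u\circ\sigma)''(t)+H(t)(u\circ\sigma)'(t)+\Delta^{\perp}u-\langle\nabla f,\nabla u\rangle$, where $\Delta^{\perp}u$ is the trace of $\Hess u$ in directions orthogonal to $\dot\sigma$. The radial pieces $(u\circ\sigma)''$ and $(f\circ\sigma)''$ can indeed be integrated by parts against $\phi^2$, and the resulting first-order cross terms can be absorbed by Young's inequality; but $\Delta^{\perp}u$ has no sign, no boundary representation, and nothing on the right-hand side of the index inequality to absorb it. No choice of exponent $a$ in $\phi=u^a h$ touches this term, because $\phi$ lives only on the one-dimensional geodesic. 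The same failure occurs in your volume argument: the Riccati comparison for the $f$-mean curvature again needs a pointwise lower bound on $\Ric(\dot\sigma,\dot\sigma)$, and the $\Delta^{\perp}u$ contamination prevents one from reaching the round model.

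This is precisely why the paper (following Antonelli--Xu and Chu--Hao) abandons geodesics and works instead with hypersurface variational problems. For the diameter bound it minimizes a weighted $\mu$-bubble functional $E_f(\Omega)=\int_{\partial^*\Omega}u^\gamma e^{-f}-\int_M(\chi_\Omega-\chi_{\Omega_0})h\,u^\alpha e^{-(k+1)f}$; the second variation of an $(n-1)$-dimensional perimeter naturally produces the \emph{full} ambient Laplacian $\Delta u=\Delta_{\partial\Omega}u+Hu_\nu+u_{\nu\nu}$, so the spectral inequality $u\,\Ric_f-\gamma\Delta_f u\ge(n-1)\lambda u$ can be inserted without residue. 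Choosing the test function $\varphi=u^{-\gamma}$, $\alpha=k\gamma$, and completing the square in $(f_\nu,X,Y)$ (this is where $0\le\gamma\le1$ is used) reduces the stability inequality to $|\nabla h|<Ch^2+D$ on $\partial\Omega$, which contradicts a barrier construction once $\diam(M,g)>\pi/\sqrt{CD}$; the specific choice $k=\tfrac{2}{n-1}$ then yields the exponents $\tfrac{n-3}{n-1}\gamma$ and $\tfrac{1}{n-1}$. For the volume bound the paper runs an unequally weighted isoperimetric profile $I(v)$ and shows, again via hypersurface second variation, that $I''I\le -\tfrac{(I')^2}{n-1}-(n-1)\lambda e^{2kf_{\min}}$ in the viscosity sense, after which an ODE comparison with the round profile gives the result. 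In short, the passage from a one-dimensional to an $(n-1)$-dimensional test object is not a stylistic choice but the mechanism that makes the full $\Delta_f u$ in the spectral condition usable.
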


\begin{remark}
(i) Theorem \ref{result} is not completely covered by Chu-Hao's results \cite{CH}. Indeed,
when $N>0$, Chu-Hao's spectrum condition $u\,\Ric_f^N-\gamma\Delta_fu\ge(n-1)\lambda u$
implies our spectrum condition $u\,\Ric_f-\gamma\Delta_fu \ge (n-1)\lambda u$; when
$-\infty<N<-(n-1)$, Chu-Hao's parameter range $0\le\gamma\le\frac{N+n-1}{N+n-2}$ is
narrower than our parameter range $0\le \gamma\le 1$. So our results are some
supplements of Chu-Hao's results in weighted manifolds. Besides, the dependence
on $f$ in our estimates is more precise than Chu-Hao's results \cite{CH}, which
may be useful for further understanding the geometry and topology of non-trivial
weighted manifolds.

(ii) When $f$ is constant, Theorem \ref{result} reduces to Antonelli-Xu's
results \cite{AX17} but our parameter range $0\le\gamma\le1$ is narrower than
their parameter range $0\le\gamma\le\frac{n-1}{n-2}$.

(iii) We indeed prove a more general estimate than Theorem \ref{result}(1) (see
Theorem \ref{resultk} in Section \ref{diacomp} below).
\end{remark}

Theorem \ref{result} is suitable to the closed shrinking gradient Ricci soliton.
In fact, we take $u\equiv 1$ and get
\begin{corollary}\label{resultc}
Let $(M,g,e^{-f}dv)$ be a closed $n$-dimensional ($n\geq3$)
shrinking gradient Ricci soliton satisfying
\[
\Ric_f=(n-1)\lambda
\]
for some constant $\lambda>0$. Then the following results hold:
\begin{enumerate}
\item [(1)] Diameter comparison:
\[
\diam(M,g)\le e^{\frac{f_{max}-f_{min}}{n-1}}\cdot\frac{\pi}{\sqrt{\lambda}}.
\]
\item [(2)] Weighted volume comparison:
\[
|M|_f\le\exp\left\{\frac{n^2+2n-1}{n(n-1)}f_{max}-\frac{2n}{n-1}f_{min}\right\}
\cdot\lambda^{-\frac n2}|\S^n|.
\]
\end{enumerate}
Here $f_{max}:=\sup_{x\in M} f(x)$ and $f_{min}:=\inf_{x\in M} f(x)$.
\end{corollary}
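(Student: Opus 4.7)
The plan is to deduce Corollary \ref{resultc} as an immediate specialization of Theorem \ref{result} by taking the constant test function $u\equiv 1$, which trivializes every term involving $u$ in the hypotheses and the bounds.

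First I would unpack the soliton equation $\Ric_f=(n-1)\lambda\,g$. Viewed as an identity of symmetric $(0,2)$-tensors on $M$, this forces the pointwise smallest eigenvalue to satisfy $\Ric_f(x)=(n-1)\lambda$ for every $x\in M$. In particular the curvature lower bound $\Ric_f\ge (n-1)\lambda$ holds in the classical pointwise sense, which is stronger than the spectral lower bound required by Theorem \ref{result}.

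Next I would verify the remaining hypotheses for the choice $u\equiv 1$. Clearly $u\in C^\infty(M)$ and $0<\inf_M u=\sup_M u=1<\infty$, so the ratio $\sup_M u/\inf_M u$ equals $1$. Because $u$ is constant, $\Delta_f u\equiv 0$, hence for any admissible $\gamma\in[0,1]$ one obtains
\[
u\,\Ric_f-\gamma\Delta_f u=\Ric_f\ge (n-1)\lambda=(n-1)\lambda\,u,
\]
which is precisely the spectral assumption of Theorem \ref{result}. Since $M$ is closed, $f\in C^\infty(M)$ attains its maximum and minimum, so $F=\|f\|_{C^0(M)}<\infty$ and $f_{\max},f_{\min}$ are finite real numbers.

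Finally I would apply Theorem \ref{result} directly. The diameter bound of the theorem specializes to
\[
\diam(M,g)\le 1^{\frac{n-3}{n-1}\gamma}\cdot e^{\frac{f_{\max}-f_{\min}}{n-1}}\cdot\frac{\pi}{\sqrt{\lambda}},
\]
since the prefactor involving $u$ collapses to $1$; this is conclusion (1). The weighted volume bound of Theorem \ref{result} does not depend on $u$ at all, so conclusion (2) is read off verbatim. There is no genuine obstacle to overcome: the only point worth stating explicitly is that, for $\gamma$ free in $[0,1]$, the diameter estimate is independent of the choice of $\gamma$ once $u$ is constant, so one may simply pick any value (e.g.\ $\gamma=0$) to invoke the theorem.
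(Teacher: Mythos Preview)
Your proposal is correct and follows exactly the approach indicated in the paper: the corollary is obtained from Theorem \ref{result} by taking $u\equiv 1$, which makes $\Delta_f u=0$, $\sup_M u/\inf_M u=1$, and the spectral hypothesis reduce to the soliton equation $\Ric_f=(n-1)\lambda$. The paper states only ``we take $u\equiv 1$'' without further elaboration, so your write-up simply spells out the verification that the paper leaves implicit.
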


In Theorem \ref{result}, the proof idea of diameter comparison originates from the
$\mu$-bubble method in \cite{Gromov19} and its devolvements in \cite{Xu}; the
proof of global volume comparison originates from the idea of isoperimetric
comparison \cite{Bray97} and further developments in \cite{BP,Ba,BGLZ19,CLMS24}.
In this paper, we provide a brief proof of Theorem \ref{result} by modifying
Chu-Hao's argument \cite{CH}. For technique details, the reader may
refer to \cite{AX17} and references therein.

On the other hand, Antonelli-Pozzetta-Xu \cite{APX24} generalized the classical
Cheeger-Gromoll splitting theorem to the case when the Ricci curvature is
nonnegative in the spectrum sense. Catino-Mari-Mastrolia-Roncoroni \cite{CMMR24}
independently proved the same result using a different method. Hong-Wang \cite{HW}
established a spectral splitting theorem for manifolds with boundary by the second
variation of the weighted length functional and the curve-capture argument.
Recently, Yeung \cite{Yeu} extended the spectrum splitting theorem to weighted
manifolds when the $N$-Bakry-Emery Ricci curvature is nonnegative in the spectrum
sense. His result states that if an $n$-dimensional complete noncompact weighted
manifold $(M,g,e^{-f}dv)$ has at least two ends and $f$ is bounded, and if
there exist two constants $N\in (0,\infty)$ and $\gamma<\left[\tfrac{1}{(n-1)(1+\frac{n-1}{N})}+\tfrac{n-1}{4}\right]^{-1}$
such that
\begin{equation}\label{specNcon}
\lambda_1\left(-\gamma \Delta_f+\Ric^N_f\right)\ge 0,
\end{equation}
then $M$ is isometric to $\R\times X$ for some manifold $X$ with $(\Ric_X)^N_f\ge 0$.
When $f$ is constant,  if $N\to 0+$, then Yeung's result recovers
Antonelli-Pozzetta-Xu's splitting theorem \cite{APX24}.

Notice that Yeung's theorem requires $0<N<\infty$. One may naturally ask
if there exists a similar spectrum splitting theorem for $N=\infty$. In
this paper, we can generalize his result to weighted manifolds when
$\Ric_f$ is nonnegative in the spectrum sense.
\begin{theorem}\label{result2}
Let $(M,g,e^{-f}dv)$ be an $n$-dimensional complete noncompact weighted manifold
with at least two ends. Assume that $f$ is bounded on $M$. If there exists a constant
$\gamma<\left(\frac{1}{n-1}+\frac{n-1}{4}\right)^{-1}$ such that
\begin{equation}\label{speccond}
\lambda_1\left(-\gamma \Delta_f+\Ric_f\right)\ge0,
\end{equation}
then $\Ric_f\ge0$ on $M$. In particular, $(M,g,e^{-f}dv)$ is isometric to
$(\R\times X, dt^2+g_X)$ for some compact manifold $X$ with $(\Ric_X)_f\ge 0$.
\end{theorem}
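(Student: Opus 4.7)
The plan is to first establish the pointwise bound $\Ric_f \geq 0$ on $M$ under the spectral hypothesis together with the two–ends assumption, and then invoke the classical Cheeger–Gromoll–type splitting for the $\infty$–Bakry–Emery curvature with bounded drift (Wei–Wylie, Fang–Li–Zhang). The pointwise reduction is the main new content and follows the scheme of Antonelli–Pozzetta–Xu and Yeung, adapted from $N<\infty$ to $N=\infty$.

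First I would invoke condition (2) of the spectral hypothesis to fix a positive $u\in C^{2,\alpha}(M)$, upgraded to $C^{\infty}$ by elliptic regularity, satisfying $u\,\Ric_f-\gamma\Delta_f u\geq 0$. Setting $w:=\log u$, this becomes the pointwise inequality $\Ric_f\geq \gamma\bigl(\Delta_f w+|\nabla w|^2\bigr)$, which converts the abstract spectral lower bound into an estimate suitable for the Bochner machinery.

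Next, I would argue by contradiction. Suppose $\Ric_f(x_0,v_0)<0$ for some unit $v_0\in T_{x_0}M$. Since $M$ has at least two ends, the standard Cheeger–Gromoll diagonal construction yields a line $\sigma:\R\to M$. Associated to its two rays are Busemann functions $b_\pm$ with $b_++b_-\geq 0$. Apply the weighted Bochner identity
\begin{equation*}
\tfrac{1}{2}\Delta_f|\nabla h|^2 = |\Hess h|^2 + \langle\nabla h,\nabla \Delta_f h\rangle + \Ric_f(\nabla h,\nabla h)
\end{equation*}
to smooth approximations $h$ of $b_\pm$ (for instance $f$–harmonic replacements on exhausting balls). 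Combining a refined Kato inequality of the shape $|\Hess h|^2\geq \tfrac{1}{n-1}(\Delta h)^2+(\text{trace-free})$ with a Peter–Paul absorption of the drift cross term $\langle\nabla f,\nabla h\rangle$, and pairing against the weight $u^{\alpha}e^{-f}$ with a compactly supported cutoff, one obtains after integration by parts and use of the $u$–PDE an inequality whose overall sign is controlled by the factor $\bigl(\tfrac{1}{n-1}+\tfrac{n-1}{4}\bigr)^{-1}-\gamma>0$. This forces $\Ric_f(\dot\sigma,\dot\sigma)\geq 0$ along $\sigma$ in an integrated sense, and by a continuity/density argument over the choice of line passing near $(x_0,v_0)$ one contradicts the assumed strict negativity. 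Hence $\Ric_f\geq 0$ on $M$. Then boundedness of $f$ plus the line $\sigma$ permits the classical weighted splitting, yielding $(M,g)\cong(\R\times X,\,dt^2+g_X)$ with $f$ pulled back from $X$ and $(\Ric_X)_f\geq 0$; compactness of $X$ follows from completeness together with the two–ends hypothesis.

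The hard part is the calibration in the integrated Bochner step. The refined Kato inequality contributes a coefficient $\tfrac{1}{n-1}$, while the mandatory Cauchy–Schwarz/Peter–Paul absorption of the cross term involving $\nabla f$ costs a complementary $\tfrac{n-1}{4}$; these two must combine additively (in their reciprocals) to produce exactly the critical threshold $\bigl(\tfrac{1}{n-1}+\tfrac{n-1}{4}\bigr)^{-1}$ appearing in the hypothesis on $\gamma$. Any slack in the Kato inequality, in the Peter–Paul split, or in the weight–absorption using the PDE for $u$ would shrink the admissible range of $\gamma$, so the three ingredients must be chosen sharply and dovetail without loss — this sharpness, rather than any conceptual novelty, is the main technical obstacle.
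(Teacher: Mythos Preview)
Your proposal has a genuine gap at the heart of the reduction to $\Ric_f\geq 0$. You want to run a Bochner argument on (approximations of) Busemann functions $b_\pm$ of a line $\sigma$, extract $\Ric_f(\dot\sigma,\dot\sigma)\geq 0$ in an integrated sense, and then ``vary the line'' to hit an arbitrary $(x_0,v_0)$. Both steps fail. First, the density claim is unfounded: in a complete manifold with two ends there is no mechanism to produce lines passing near a prescribed point in a prescribed direction, so the contradiction never reaches the assumed bad point. Second, the Bochner step is circular: control of $\Delta_f b_\pm$ and of the $f$-harmonic replacements relies on a Laplacian comparison which already needs $\Ric_f\geq 0$ as input. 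Without it you have no sign on $\Delta_f h$, the integration by parts carries uncontrolled terms, and the asserted emergence of the exact threshold $\bigl(\tfrac{1}{n-1}+\tfrac{n-1}{4}\bigr)^{-1}$ from ``Kato $+$ Peter--Paul'' is never substantiated.

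The paper takes a completely different route, with no Busemann functions and no Bochner formula. One fixes, via the Fischer--Colbrie--Schoen equivalence, a positive $u_0$ solving $-\gamma\Delta_f u_0+\Ric_f\, u_0=0$ and proves $|\nabla u_0|\equiv 0$ (hence $\Ric_f\equiv 0$) by a warped $\mu$-bubble argument. For a fixed point $x$ one perturbs $u_0$ to $u=u_{r,a}$ so that $-\gamma\Delta_f u+\Ric_f\, u$ is strictly positive off a tiny ball $B(x,r)$, then minimizes the functional $\mathcal P_{f,\e}(\Omega)=\int_{\partial^*\Omega}u^\gamma e^{-f}-\int(\chi_\Omega-\chi_{\Omega_0})h_\e u^\gamma e^{-(1+\frac{2}{n-1})f}$. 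Uniform density estimates force $\partial^*\Omega_\e$ to meet $B(x,2r)$. The second variation with test function $u^{-\gamma/2}$ yields a quadratic form in $(h_\e e^{-2f/(n-1)},\,u^{-1}u_\nu,\,f_\nu)$; the threshold on $\gamma$ comes precisely from the discriminant analysis of this form after completing the square in $f_\nu$ (Claims~1 and~2 in the proof), not from Kato. Sending $\e,r,\delta\to 0$ then contradicts $|\nabla u_0|(x)>0$. Only after $\Ric_f\geq 0$ is established does one invoke the Lichnerowicz/Wei--Wylie splitting.
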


\begin{remark}
(i) Wei and Wylie (see Example 2.2 in \cite{WW09}) pointed out that the
upper bound of $f$ is necessary in Theorem \ref{result2} with $\gamma=0$.
Indeed for any fixed point $p$ in the hyperbolic space ${\mathbb H}^n$,
let $f(x)=(n-1)d^2(p,x)$, where $d(p,x)$ denotes the distance from $p$ to
point $x\in{\mathbb H}^n$. Then $\mathrm{Ric}_f\ge(n-1)$. But the
Cheeger-Gromoll type splitting theorem does not hold on ${\mathbb H}^n$.

(ii) The lower bound of $f$ is used in two steps in the proof of Theorem
\ref{result2}. One is the auxiliary density estimate for the unequally
warped $\mu$-bubble (see Lemma \ref{densest}(2) below) and the other is
the second variation for the unequally warped $\mu$-bubble
(see \eqref{eqn:Defnc0} and \eqref{eq:2ndvar_final} below). At present
we do not know if such condition can be removed or not.

(iii) When $0<N<\infty$, Yeung's parameter range   $\gamma<\Big[\tfrac{1}{(n-1)(1+\frac{n-1}{N})}+\tfrac{n-1}{4}\Big]^{-1}$
is wider than our parameter range $\gamma<\left(\frac{1}{n-1}+\frac{n-1}{4}\right)^{-1}$,
but Yeung's spectrum condition \eqref{specNcon} is stronger than our spectrum
condition \eqref{speccond}. Hence our result is a supplement of Yeung's result
in weighted manifolds.

(iv) When $f$ is constant, Theorem \ref{result2} reduces to Antonelli-Pozzetta-Xu's
result \cite{APX24} but our parameter range
$\gamma<\left(\frac{1}{n-1}+\frac{n-1}{4}\right)^{-1}$ is narrower than their
parameter range $\gamma<\frac{4}{n-1}$.
\end{remark}

The proof of Theorem \ref{result2} applies the trick of Antonelli-Pozzetta-Xu
\cite{APX24}, by combining the $\mu$-bubble technique in \cite{Gromov19,Zhu23}
and the surface-capturing technique in \cite{CCE16,CEM19,CL24,Liu,Zhu23}. In
our proof, we follow the argument of \cite{APX24} by studying an unequally
warped $\mu$-bubble in weighted manifolds. Notice that Yeung \cite{Yeu}
also followed the argument of \cite{APX24} to study the spectrum splitting
theorem in weighted manifolds. His proof considers the energy functional of
finite perimeter subsets when $\Ric^N_f$ is nonnegative in the spectrum sense,
whereas our proof focuses on such functional when $\Ric_f$ is nonnegative
in the spectrum sense.

The rest paper is organized as follows. In Section \ref{Pre}, we give some basic
concepts about sets of locally finite perimeter in manifolds. In Section \ref{diacomp},
we prove the diameter comparison in Theorem \ref{result}. In Section \ref{volcomp},
we prove the global volume comparison in Theorem \ref{result}. In Section
\ref{split}, we prove Theorem \ref{result2}.


\section{Preliminary}\label{Pre}
In this section, we recall some basic concepts about sets of finite perimeter
in manifolds (see e.g. \cite{Vo}). In particular, we give the De Giorgi's
structure theorem in manifolds.
We refer the reader to \cite{AFP00, Maggi12} for the classical geometric measure
theory of sets of finite perimeter in the Euclidean space, which naturally extends
to the manifold case.

If $E$ is a Lebesgue measurable set in an $n$-dimensional Riemannian manifold $(M,g)$,
then the set $E$ is called \emph{locally finite perimeter} if there exists a Radon
measure $v_E$ in $(M,g)$, called the Gauss-Green measure of $E$, such that
\[
\int_E \nabla\varphi(x) dv_x=\int_M \varphi dv_E
\]
holds for all $\varphi\in C_c^1(M)$, where $C_c^1(M)$ denotes the space of
differentiable function with compact support in $M$. The total variation
measure $|v_E|$ of $v_E$ induces the notions of both the relative perimeter
measure $P(E,A)$ of $E$ with respect to a Borel set $A\subset M$, defined by
\[
P(E,A)=|v_E|(A)
\]
and the total perimeter measure $P(E)$ of $E$, defined by
\[
P(E)=|v_E|(M).
\]
In particular, $E$ is a set of finite perimeter measure if and only if
$P(E)<\infty$. If $E$ has a $C^1$ boundary, then the above concepts
return to the classical Hausdorff measure. In particular,
\[
P(E,A)=\mathcal{H}^{n-1}(A \cap \p E)
\]
and
\[
P(E)=\mathcal{H}^{n-1}(\p E),
\]
where $\mathcal{H}^{n-1}$ denotes the $(n-1)$-dimensional Hausdorff measure.

On an $n$-dimensional Riemannian manifold $(M,g)$, an (outer) measure $\mu$
on $M$ is concentrated on $E\subset M$ if $\mu(M\setminus E)=0$. The intersection
of the closed sets $E$ such that $\mu$ is concentrated on $E$ is denoted by
$\mathrm{supp}\,\mu$, and called the support of $\mu$. The \emph{reduced boundary}
$\p^*E$ of a set of locally finite perimeter $E$ in $M$ is the set of those
$x\in \mathrm{supp}\,v_E$ such that the limit
\[
\lim_{r\to 0+}\frac{ v_E(B(x,r))}{|v_E|(B(x,r)}
\]
exists and belongs to $\S^{n-1}$. We say that the \emph{measure-theoretic outer
unit normal} to $E$, denoted by $\nu_E$, is a Borel function $\nu_E:\p^*E\to\S^{n-1}$
by setting
\[
\nu_E(x)=\lim_{r\to 0+}\frac{ v_E(B(x,r))}{|v_E|(B(x,r)},
\]
where $x\in\p^*E$. The reduced boundary $\p^*E$ is uniquely determined by
the Gauss–Green measure $v_E$ of $E$. If $E$ is an open set with $C^1$ boundary,
then $\p^*E=\p E$ and the measure-theoretic outer unit normal coincides with
classical notion of outer unit normal.

By the well-known De Giorgi's structure theorem (see e.g. Theorem 15.9 in
\cite{Maggi12}), we know that the perimeter is concentrated on the reduced
boundary $\p^*E$. More precisely, if $E$ is a set of locally finite perimeter
in an $n$-dimensional Riemannian manifold $(M,g)$, then the Gauss–Green measure
$v_E$ of $E$ satisfies
\[
v_E=\nu_E\, \mathcal{H}^{n-1}\llcorner\p^*E,\quad
|v_E|=\mathcal{H}^{n-1}\llcorner\p^*E
\]
and the generalized Gauss-Green formula
\[
\int_E \nabla\varphi(x)dv_x=\int_{\p^*E} \varphi\,\nu_E\, d\mathcal{H}^{n-1}
\]
holds for all $\varphi\in C_c^1(M)$. These results are motivated by the classical
Gauss-Green theorem.

On a Riemannian manifold $(M,g)$, for any set of locally finite perimeter $E\subset M$
and for any Borel set $A\subset M$, in this paper, we will denote
\[
|\partial^* E\cap A|:=P(E,A)\quad\mathrm{and}\quad
|\partial^* E|:=P(E).
\]
for ease of notations. These notations will be repeatedly used in Section \ref{split}.

\section{Diameter comparison}\label{diacomp}

In this section, we shall prove the first part of Theorem \ref{result} by following
the arguments of \cite{AX17,CH}.

On $(M,g,e^{-f}dv)$, let $\Omega_-\subset\Omega_+\subset M$ be two domains with
non-empty boundaries
such that $\overline{\Omega_+}\setminus\Omega_-$ is compact. Let $h\in C^\infty(\Omega_+\setminus\overline{\Omega_-})$ be a smooth function such that
\[
\text{$\lim_{x\to\de\Omega_-}h(x)=+\infty$ and $\lim_{x\to\de\Omega_+}h(x)=-\infty$ uniformly.}
\]
Fix constants $k$, $\alpha$, $\gamma\in[0,1]$ and a domain $\Omega_0$
with $\Omega_-\Subset\Omega_0\Subset\Omega_+$. Here $\Omega_-\Subset\Omega_0$
means that the closure of $\Omega_-$ is relatively compact in $\Omega_0$.
For a set $\Omega\subset M$ of locally finite perimeter, we consider the functional
\[
E_f(\Omega):=\int_{\p^*\Omega}u^{\gamma}e^{-f}-\int_M(\chi_{\Omega}-\chi_{\Omega_0})hu^\alpha e^{-(k+1)f},
\]
where $\p^*\Omega$ is the reduced boundary of $\Omega$.  Here $u\in C^{\infty}(M)$
is a positive function  satisfying $u\,\Ric_f-\gamma\Delta_fu \geq (n-1)\lambda u$.
In the above (and following)
integrals, we often omit the area measure element $d\mathcal{H}^{n-1}$ and the
volume measure element $dv$ for simplicity. Here $\chi_\Omega$ denotes the
characteristic function on $\Omega$. By a similar argument of Proposition 2.1 in
\cite{Zhu21}, the functional $E_f$ must have a minimizer $\Omega$ such that $\Omega_-\Subset\Omega\Subset\Omega_+$. By the classical geometric measure
theory (see e.g. Theorems 27.5 and 28.1 in \cite{Maggi12}), we see that
$\p\Omega$ is smooth when $n\leq 7$ while the singular part of $\p\Omega$
has Hausdorff dimension no more than $n-8$.

We first consider the case $3\le n\le 7$. For any $\vp\in C^{\infty}(M)$,
let $\{\Omega_t\}_{t\in(-\e,\e)}$ be a smooth family of sets
with $\Omega_0=\Omega$ and $\frac{\p \Omega_t}{\de t}=\vp\nu$ at $t=0$,
where $\nu$ is the outer unit normal of $\p\Omega_t$. The first variation
of $E_f(\Omega_t)$ is that
\[
0=\frac{\mathrm{d}E_f(\Omega_t)}{\mathrm{d}t}\Bigg|_{t=0}
=\int_{\p\Omega}\Big(H_f+\gamma u^{-1}u_\nu-hu^{\alpha-\gamma}e^{-kf}\Big)u^\gamma e^{-f}\vp,
\]
where $H_f:=H-g(\nabla f,\nu)=H-f_\nu$. Since $\vp$ is arbitrary,
by letting $\vp=H_f+\gamma u^{-1}u_\nu-hu^{\alpha-\gamma}e^{-kf}$,
the above variation implies
\begin{equation}\label{H}
H_f=hu^{\alpha-\gamma}e^{-kf}-\gamma u^{-1}u_\nu \quad \text{on $\p \Omega$.}
\end{equation}
The second variation of $E_f(\Omega_t)$ is that
\begin{equation}
\begin{split}\label{varia2nd}
0\le&\frac{\mathrm{d}^2 E_f(\Omega_t)}{\mathrm{d}t^2}\Bigg|_{t=0}\\
=&\int_{\p\Omega}\Big[-\Delta_{\p\Omega}\vp-|\mathrm{II}|^2\vp
-\Ric_f(\nu,\nu)\vp-\gamma u^{-2}u_\nu^2\vp\\
&\quad\quad \ \,+\gamma u^{-1}\vp(\Delta u-\Delta_{\p\Omega}u-Hu_\nu)
-\gamma u^{-1}\langle\D_{\p\Omega}u,\D_{\p\Omega}\vp\rangle
+\langle\D_{\p\Omega} f,\D_{\p\Omega}\vp\rangle \\
&\quad \quad \ \,-h_{\nu}u^{\alpha-\gamma}e^{-kf}\vp
-(\alpha-\gamma)hu^{\alpha-\gamma-1}u_\nu e^{-kf}\vp
+k hu^{\alpha-\gamma}e^{-kf}f_{\nu}\varphi\Big] u^\gamma e^{-f}\vp\\
=&\int_{\p\Omega}
\Big[-\Delta_{\p\Omega}\vp-|\mathrm{II}|^2\vp
+\big(\gamma\Delta_fu-u\Ric_f(\nu,\nu)\big)u^{-1}\varphi -\gamma u^{-2}u_\nu^2\vp\\
&\quad\quad \,+\gamma u^{-1}\vp(-\Delta_{\p\Omega}u+\langle\D u,\D f\rangle-Hu_\nu)
-\gamma u^{-1}\langle\nabla_{\p\Omega}u,\nabla_{\p\Omega}\vp\rangle
+\langle\D_{\p\Omega} f,\D_{\p\Omega}\varphi\rangle \\
&\quad\quad \,-h_{\nu}u^{\alpha-\gamma}e^{-kf}\vp-(\alpha-\gamma)hu^{\alpha-\gamma-1}u_\nu e^{-kf}\vp
+khu^{\alpha-\gamma}e^{-kf}f_{\nu}\vp\Big] u^\gamma e^{-f}\vp,
\end{split}
\end{equation}
where we used the definitions of $\Delta_{f}$ and $\Ric_f$.

If we choose some special case of $\vp$ and $\alpha$, \eqref{varia2nd}
can be simplified as follows.
\begin{lemma}\label{h inequality}
Let $\vp=u^{-\gamma}$ and $\alpha=k\gamma$ in \eqref{varia2nd}. Then
\[
0\le\int_{\p\Omega}\Big[|\D h|u^{\alpha-\gamma}-\frac{4k-(n-1)k^2}{4}u^{2\alpha-2\gamma}e^{-kf}h^2
-(n-1)\lambda e^{kf}\Big]u^{-\gamma}e^{-(k+1)f}.
\]
\end{lemma}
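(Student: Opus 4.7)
The plan is to substitute $\vp=u^{-\gamma}$ and $\alpha=k\gamma$ into \eqref{varia2nd} and reduce the resulting integrand through a sequence of integrations by parts on $\p\Omega$, an application of the spectral hypothesis, and a completed-square version of Cauchy--Schwarz for $|\mathrm{II}|^{2}$. These specific choices of $\vp$ and $\alpha$, together with a well-chosen auxiliary parameter in the curvature bound, are engineered so that every cross term in the second variation cancels exactly and only the three contributions in the displayed inequality survive.

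With $\vp=u^{-\gamma}$ and $\alpha=k\gamma$ the outer factor $u^{\gamma}e^{-f}\vp$ collapses to $e^{-f}$. I would first expand $-Hu_\nu$ in the $\gamma u^{-1}\vp(\cdots)$ term using the mean curvature equation \eqref{H} in the form $H=H_f+f_\nu=hu^{(k-1)\gamma}e^{-kf}-\gamma u^{-1}u_\nu+f_\nu$: the resulting $\gamma u^{-1}u_\nu^{2}$ sub-term combines with $-\gamma u^{-2}u_\nu^{2}\vp$ to give $-\gamma(1-\gamma)u^{-\gamma-2}u_\nu^{2}e^{-f}\le 0$ (this is where $0\le\gamma\le 1$ enters); the $f_\nu u_\nu$ sub-term cancels exactly with the normal piece of $\gamma u^{-1}\vp\langle\D u,\D f\rangle$; and the $hu_\nu$ sub-term is retained for later. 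Next, integrating $-\Delta_{\p\Omega}\vp$ by parts on the closed hypersurface $\p\Omega$ produces three pieces, two of which annihilate $-\gamma u^{-1}\langle\D_{\p\Omega}u,\D_{\p\Omega}\vp\rangle$ and $\langle\D_{\p\Omega}f,\D_{\p\Omega}\vp\rangle$, leaving $\gamma^{2}u^{-\gamma-2}|\D_{\p\Omega}u|^{2}e^{-f}$. A second integration by parts on the $\gamma u^{-1}\vp(-\Delta_{\p\Omega}u)$ piece cancels the tangential $\langle\D_{\p\Omega}u,\D_{\p\Omega}f\rangle$ contribution and combines with the preceding gradient term to yield the non-positive residual $-\gamma u^{-\gamma-2}|\D_{\p\Omega}u|^{2}e^{-f}$, which can be discarded.

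The spectral hypothesis, together with $\Ric_f(\nu,\nu)\ge\Ric_f$, bounds the Ricci contribution by $(\gamma\Delta_{f}u-u\Ric_{f}(\nu,\nu))u^{-\gamma-1}e^{-f}\leq-(n-1)\lambda u^{-\gamma}e^{-f}$, producing the $-(n-1)\lambda e^{kf}$ piece after factoring the common measure $u^{-\gamma}e^{-(k+1)f}$. The substantive step is the treatment of $-|\mathrm{II}|^{2}$; I would use the completed-square refinement of Cauchy--Schwarz: for every $\mu\in\mathbb{R}$,
\[
|\mathrm{II}|^{2}\ge \tfrac{H^{2}}{n-1}\ge \mu H-\tfrac{(n-1)\mu^{2}}{4},
\]
with the specific choice $\mu:=khu^{(k-1)\gamma}e^{-kf}$. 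Expanding $-\mu H+\tfrac{(n-1)\mu^{2}}{4}$ with $H=hu^{(k-1)\gamma}e^{-kf}-\gamma u^{-1}u_{\nu}+f_{\nu}$ produces exactly the target coefficient $-\tfrac{4k-(n-1)k^{2}}{4}h^{2}u^{2(k-1)\gamma}e^{-2kf}$ plus two cross pieces: $k\gamma hu^{(k-1)\gamma-1}u_{\nu}e^{-kf}$ cancels the retained $hu_\nu$ contribution together with the $-(\alpha-\gamma)hu^{\alpha-\gamma-1}u_{\nu}e^{-kf}\vp$ term from \eqref{varia2nd} (after multiplying by $u^{-\gamma}e^{-f}$ and using $\alpha=k\gamma$), and $-khu^{(k-1)\gamma}f_{\nu}e^{-kf}$ cancels the $khu^{\alpha-\gamma}e^{-kf}f_{\nu}\vp$ term.

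After these cancellations the only unabsorbed contribution is $-h_{\nu}u^{\alpha-\gamma}e^{-kf}\vp\cdot u^{\gamma}e^{-f}\vp$, which I would bound above by $|\D h|u^{\alpha-\gamma}\cdot u^{-\gamma}e^{-(k+1)f}$ via $-h_{\nu}\leq|h_{\nu}|\leq|\D h|$. Dropping the non-positive leftovers and factoring out the common measure $u^{-\gamma}e^{-(k+1)f}$ then yields the lemma. The hard part will be the bookkeeping of the cross-term cancellation: once the parameter $\mu=khu^{(k-1)\gamma}e^{-kf}$ is identified the rest is routine algebra, but it is the simultaneous perfect cancellation of both the $hu_{\nu}$ and the $hf_{\nu}$ pieces that forces the specific choices $\vp=u^{-\gamma}$, $\alpha=k\gamma$, and $\mu=khu^{(k-1)\gamma}e^{-kf}$.
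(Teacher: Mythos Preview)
Your proposal is correct and reaches the lemma, but the bookkeeping is organized differently from the paper. The paper does not integrate by parts at all for the tangential Laplacian pieces: with $\vp=u^{-\gamma}$ it simply computes pointwise that
\[
-\Delta_{\p\Omega}\vp-\gamma u^{-1}\vp\,\Delta_{\p\Omega}u-\gamma u^{-1}\langle\D_{\p\Omega}u,\D_{\p\Omega}\vp\rangle=-\gamma u^{-\gamma-2}|\D_{\p\Omega}u|^2\le 0
\]
and $\gamma u^{-1}\vp\langle\D u,\D f\rangle+\langle\D_{\p\Omega}f,\D_{\p\Omega}\vp\rangle=\gamma u^{-\gamma-1}u_\nu f_\nu$. (Your description of the first integration by parts is garbled --- integrating $-\Delta_{\p\Omega}\vp$ against $e^{-f}$ yields a single term, not three --- but the residual you claim is nonetheless correct.) The more interesting difference is the handling of $|\mathrm{II}|^2$: the paper uses $|\mathrm{II}|^2\ge H^2/(n-1)$, sets $X=hu^{\alpha-\gamma}e^{-kf}$, $Y=u^{-1}u_\nu$, expands the full integrand as a quadratic in $(X,Y,f_\nu)$, and then completes the square in $f_\nu$; the choices $\alpha=k\gamma$ and $\gamma\le 1$ then kill the residual $XY$ and $Y^2$ coefficients. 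Your tuned linear bound $|\mathrm{II}|^2\ge \mu H-\tfrac{(n-1)\mu^2}{4}$ with $\mu=kX$ is algebraically the \emph{same} step --- both discard exactly $\tfrac{1}{n-1}\big(H-\tfrac{(n-1)k}{2}X\big)^2=\tfrac{1}{n-1}\big(f_\nu-\tfrac{(n-1)k-2}{2}X-\gamma Y\big)^2$ --- but your packaging makes the cross-term cancellations immediate once $\mu$ is chosen, while the paper's version makes it more transparent why $\alpha=k\gamma$ and $\gamma\le 1$ are the natural constraints.
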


\begin{proof}[Proof of Lemma \ref{h inequality}]
Letting $\vp=u^{-\gamma}$ and $\gamma\ge0$, we have
\begin{equation}\label{estimate1}
-\Delta_{\p\Omega}\varphi-\gamma u^{-1}\vp\Delta_{\p\Omega}u
-\gamma u^{-1}\langle\D_{\p\Omega}u,\D_{\p\Omega}\vp\rangle
= -\gamma u^{-\gamma-2}|\nabla_{\de\Omega}u|^2\le 0
\end{equation}
and
\begin{equation}\label{estimate2}
\gamma u^{-1}\varphi\langle\D u,\D f\rangle
+\langle\D_{\p\Omega}f,\D_{\p\Omega}\vp\rangle
=\gamma u^{-\gamma-1}u_{\nu}f_{\nu}.
\end{equation}
Using \eqref{estimate1}, \eqref{estimate2}, the trace inequality $|\mathrm{II}|^2\ge\frac{H^2}{n-1}$
and $u\,\Ric_f-\gamma\Delta_fu\ge(n-1)\lambda u$, the inequality \eqref{varia2nd} can be
reduced to
\[
\begin{split}
0\le\int_{\p\Omega}\Big[&-\frac{H^2}{n-1}u^{-\gamma}-(n-1)\lambda u^{-\gamma}
-\gamma u^{-\gamma-2}u_\nu^2-\gamma u^{-\gamma-1}H_fu_\nu
-h_{\nu}u^{\alpha-\gamma}e^{-kf}u^{-\gamma}\\
&-(\alpha-\gamma)hu^{\alpha-\gamma-1}u_\nu e^{-kf}u^{-\gamma}
+khu^{\alpha-\gamma}e^{-kf}f_{\nu}u^{-\gamma}\Big]e^{-f} \\
\le\int_{\partial\Omega}\Big[&-\frac{H^2}{n-1}-(n-1)\lambda-\gamma u^{-2}u_\nu^2
-\gamma u^{-1}H_fu_\nu+|\D h|u^{\alpha-\gamma}e^{-kf}\\
&-(\alpha-\gamma)hu^{\alpha-\gamma-1}u_\nu e^{-kf}
+khu^{\alpha-\gamma}e^{-kf}f_{\nu}\Big]u^{-\gamma}e^{-f}.
\end{split}
\]
Let
\[
X=hu^{\alpha-\gamma}e^{-kf}\quad \mathrm{and}\quad
Y=u^{-1}u_{\nu}.
\]
By \eqref{H}, then
\[
H_f=X-\gamma Y.
\]
Thus the above inequality becomes
\[
\begin{split}
0\le\int_{\p\Omega}\bigg[&-\frac{(f_{\nu}+X-\gamma Y)^2}{n-1}-(n-1)\lambda-\gamma Y^2-\gamma Y(X-\gamma Y)\\
&+|\D h|u^{\alpha-\gamma}e^{-kf}+(\gamma-\alpha)XY+kXf_\nu\bigg] u^{-\gamma}e^{-f} \\
= \int_{\p\Omega}
\bigg[&-\frac{f_{\nu}^2}{n-1}+\Big(k-\frac{2}{n-1}\Big)Xf_{\nu}+\frac{2\gamma}{n-1}Yf_{\nu}-\frac{X^{2}}{n-1} \\
&+\Big(\frac{2\gamma}{n-1}-\alpha\Big)XY+\Big(\frac{n-2}{n-1}\gamma^2-\gamma\Big)Y^2
+|\D h|u^{\alpha-\gamma}e^{-kf}-(n-1)\lambda\bigg]u^{-\gamma}e^{-f}.
\end{split}
\]
Noticing that
\[
\begin{split}
-\frac{f_{\nu}^2}{n-1}&+\Big(k-\frac{2}{n-1}\Big)Xf_{\nu}+\frac{2\gamma}{n-1}Yf_{\nu}\\
=& -\frac{f_{\nu}^2}{n-1}+\left[\Big(k-\frac{2}{n-1}\Big)X+\frac{2\gamma}{n-1}Y\right]f_{\nu}\\
\le& \frac{n-1}{4}\left[\Big(k-\frac{2}{n-1}\Big)X+\frac{2\gamma}{n-1}Y\right]^2\\
=&\frac{\big((n-1)k-2\big)^2}{4(n-1)}X^2
+\frac{\big((n-1)k-2\big)\gamma}{n-1}XY+\frac{\gamma^2}{n-1}Y^2,
\end{split}
\]
so we have
\[
0\le\int_{\p\Omega}\bigg[\tfrac{(n-1)k^2-4k}{4}X^2+(k\gamma{-}\alpha)XY+\gamma(\gamma-1)Y^2
+|\D h|u^{\alpha-\gamma}e^{-kf}-(n-1)\lambda\bigg]u^{-\gamma}e^{-f}.
\]
Since $\alpha=k\gamma$ and $0\le\gamma\le 1$, we further have
\[
0\le\int_{\partial\Omega}\bigg[-\frac{4k-(n-1)k^2}{4}X^2+|\D h|u^{\alpha-\gamma}e^{-kf}-(n-1)\lambda\bigg]u^{-\gamma}e^{-f}.
\]
Since $X=hu^{\alpha-\gamma}e^{-kf}$, the lemma follows.
\end{proof}

To prove the diameter comparison, we also need the following lemma
which has been confirmed in the proof of Lemma 1 of \cite{AX17}.
\begin{lemma}\label{h lemma}
Let $(M,g)$ be a Riemannian manifold and $C$, $D$ be two positive constants.
If
\[
\diam(M,g)>\frac{\pi}{\sqrt{CD}},
\]
then there exist two domains $\Omega_-\subset\Omega_+\subset M$ with non-empty
boundaries such that $\overline{\Omega_+}\setminus\Omega_-$ is compact and
function $h\in C^{\infty}(\Omega_+\setminus\ov{\Omega_-})$ satisfies
\[
|\D h|<Ch^{2}+D
\]
with $\lim_{x\to\de\Omega_-}h(x)=+\infty$ and $\lim_{x\to\de\Omega_+}h(x)=-\infty$
uniformly.
\end{lemma}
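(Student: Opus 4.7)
The plan is to construct $h$ as a composition $h=h_0\circ\widetilde r$, where $\widetilde r$ is a smooth proxy for the distance from a fixed point $p\in M$ and $h_0$ is a one-dimensional profile adapted to an ODE of the form $h_0'=-\alpha(Ch_0^2+D)$ with a slight contraction factor $\alpha<1$. The key observation is that the critical length $\pi/\sqrt{CD}$ is exactly the length of the maximal interval on which the sharp ODE $h_0'=-(Ch_0^2+D)$ admits a solution going from $+\infty$ to $-\infty$; since $\diam(M,g)$ strictly exceeds this number, there is room to contract slightly and thereby obtain the \emph{strict} gradient bound $|\D h|<Ch^2+D$.

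First I pick $p,q\in M$ with $d(p,q)>\pi/\sqrt{CD}$ and then $\alpha\in(0,1)$ close enough to $1$ that the contracted period $\pi/(\alpha\sqrt{CD})$ still fits strictly inside $(0,d(p,q))$. Integrating the ODE on a maximal interval gives the explicit profile
\[
h_0(t)=\sqrt{D/C}\,\cot\bigl(\alpha\sqrt{CD}\,(t-c_1)\bigr),\qquad t\in(c_1,c_2),
\]
with $c_2-c_1=\pi/(\alpha\sqrt{CD})$, and by translation I place $(c_1,c_2)$ inside $(0,d(p,q))$ so that $h_0(t)\to+\infty$ as $t\to c_1^+$ and $h_0(t)\to-\infty$ as $t\to c_2^-$.

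Next, using a standard regularization (Greene--Wu smoothing, or infimal convolution) I produce $\widetilde r\in C^\infty(M)$ with $|\widetilde r(x)-d(p,x)|<\e$ and $|\D\widetilde r|\le 1+\e$ on $M$, where $\e>0$ is taken small enough that $\alpha(1+\e)<1$ and the level sets $\{\widetilde r=c_1\}$, $\{\widetilde r=c_2\}$ still lie inside $B_{d(p,q)}(p)$. I then set $\Omega_-:=\{\widetilde r<c_1\}$, $\Omega_+:=\{\widetilde r<c_2\}$ and $h:=h_0\circ\widetilde r$ on $\Omega_+\setminus\ov{\Omega_-}$. Smoothness of $h$ is then automatic; $\ov{\Omega_+}\setminus\Omega_-\subset\widetilde r^{-1}([c_1,c_2])$ is bounded and closed (hence compact, under the completeness used in the applications of the lemma); the blow-up behavior of $h$ at $\de\Omega_\pm$ is inherited from that of $h_0$; and the gradient bound reduces to
\[
|\D h|=|h_0'(\widetilde r)|\,|\D\widetilde r|\le \alpha(1+\e)(Ch^2+D)<Ch^2+D.
\]

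The main obstacle is producing the smooth proxy $\widetilde r$ with the simultaneous bounds $|\widetilde r-d(p,\cdot)|<\e$ and $|\D\widetilde r|\le 1+\e$: the distance function is only $1$-Lipschitz and generally fails to be smooth on the cut locus. This is handled by classical regularization results for Lipschitz functions on Riemannian manifolds, and the small relaxation $1+\e$ in the gradient is precisely what the contraction factor $\alpha<1$ in the ODE is designed to absorb.
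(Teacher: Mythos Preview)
The paper does not give its own proof of this lemma; it simply records that the statement ``has been confirmed in the proof of Lemma~1 of \cite{AX17}'' and moves on. Your construction---a cotangent profile $h_0(t)=\sqrt{D/C}\,\cot\bigl(\alpha\sqrt{CD}(t-c_1)\bigr)$ solving $h_0'=-\alpha(Ch_0^2+D)$, precomposed with a Greene--Wu smoothing $\widetilde r$ of the distance to a point, with the contraction $\alpha<1$ absorbing the $(1+\e)$ in $|\nabla\widetilde r|$---is exactly the standard argument behind that reference, and it is correct in substance.

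Two small technical points are worth tightening. First, to guarantee that $\Omega_+\setminus\overline{\Omega_-}=\{c_1<\widetilde r<c_2\}$ (so that $h_0\circ\widetilde r$ is actually defined and smooth there) and that the boundaries $\partial\Omega_\pm$ are genuine nonempty hypersurfaces, you should choose $c_1,c_2$ to be regular values of $\widetilde r$; Sard's theorem and the available slack in placing $(c_1,c_2)\subset(0,d(p,q))$ make this free. Second, as you note, compactness of $\overline{\Omega_+}\setminus\Omega_-$ uses completeness (via Hopf--Rinow), which the lemma does not state explicitly but which holds in every application in the paper.
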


Using the above lemmas, we shall prove the diameter comparison in
Theorem \ref{result}.
\begin{proof}[Proof of (1) in Theorem \ref{result}]
We first prove the result when $3\le n\le 7$.
Let $0<k\le 1$ and $4k-(n-1)k^2>0$.
Then $\alpha=k\gamma\le\gamma$. Define two positive constants
\[
C=\frac{4k-(n-1)k^2}{4}\cdot\frac{(\sup_Mu)^{2\alpha-2\gamma}}{(\inf_Mu)^{\alpha-\gamma}} e^{-kf_{max}}
\quad\text{and}\quad
D=(n-1)\lambda\left(\inf_Mu\right)^{\gamma-\alpha}e^{kf_{min}}.
\]
From Lemma \ref{h inequality}, we have that
\[
\begin{split}
0\le&\int_{\p\Omega}\Big[|\D h|u^{\alpha-\gamma}-C(\inf_Mu)^{\alpha-\gamma}h^{2}
-D(\inf_Mu)^{\alpha-\gamma}\Big]u^{-\gamma}e^{-(k+1)f} \\
\le&\int_{\p\Omega}\Big[|\D h|-Ch^2-D\Big]u^{\alpha-2\gamma}e^{-(k+1)f}.
\end{split}
\]
If $\diam(M,g)>\frac{\pi}{\sqrt{CD}}$, then we get a contradiction by letting
function $h$ as in Lemma \ref{h lemma}. Hence we conclude that
$\diam(M,g)\le\frac{\pi}{\sqrt{CD}}$. According to the definitions of $C$ and $D$,
we in fact get
\begin{equation}\label{genest}
\diam(M,g)\le\left(\frac{\sup_Mu}{\inf_Mu}\right)^{\gamma-\alpha}
\cdot\sqrt{\frac{4}{n-1}}\cdot\sqrt{\frac{e^{k(f_{max}-f_{min})}}{4k-(n-1)k^2}}
\cdot\frac{\pi}{\sqrt{\lambda}},
\end{equation}
where $\alpha=k\gamma$, $0\le \gamma\le 1$ and $k\leq1$, $4k-(n-1)k^2>0$.
Taking $k=\frac{2}{n-1}$, then
\[
\diam(M,g)\le\left(\frac{\sup_Mu}{\inf_Mu}\right)^{\frac{n-3}{n-1}\gamma}
\cdot e^{\frac{f_{max}-f_{min}}{n-1}}\cdot\frac{\pi}{\sqrt{\lambda}}.
\]

In the rest we prove the conclusion when $n\ge8$. In this case, $\de\Omega$
may have singularities. The diameter comparison can be proved by modifying
the argument of the case $3\le n\le7$ verbatim as Appendix A in \cite{AX17}.
\end{proof}

From the above proof course, if we do not choose a special value of $k$, we
indeed prove the following general result, which may be a little better than
Theorem \ref{result}(1) in some cases.
\begin{theorem}\label{resultk}
Let $(M,g,e^{-f}dv)$ be a complete $n$-dimensional ($n\geq3$) weighted manifold
and $\gamma$ be a constant such that $0\le \gamma\le 1$. Assume that there exists
a positive function $u\in C^{\infty}(M)$ such that
\[
u\,\Ric_f-\gamma\Delta_fu \geq (n-1)\lambda u
\]
for some constant $\lambda>0$. If $0<\inf_Mu\le\sup_Mu<\infty$ and
$F=\|f\|_{C^0(M)}<\infty$, then the following diameter comparison holds:
\[
\diam(M,g)\le\left(\frac{\sup_Mu}{\inf_Mu}\right)^{(1-k)\gamma}
\cdot\sqrt{\frac{e^{k(f_{max}-f_{min})}}{4k-(n-1)k^2}}
\cdot\frac{2\pi}{\sqrt{(n-1)\lambda}},
\]
where $0<k\le 1$ and $4k-(n-1)k^2>0$.
Here $f_{max}:=\sup_{x\in M} f(x)$ and $f_{min}:=\inf_{x\in M} f(x)$.
\end{theorem}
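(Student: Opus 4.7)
The plan is to revisit the proof of Theorem \ref{result}(1) and simply refrain from specializing the parameter $k$ to $2/(n-1)$. The entire machinery needed is already in place: the weighted $\mu$-bubble functional $E_f$ with parameters $\gamma$, $\alpha=k\gamma$ and $k$; the existence of a minimizer $\Omega$ with $\Omega_-\Subset\Omega\Subset\Omega_+$; smoothness of $\partial\Omega$ for $n\le 7$ together with the small-singular-set perturbation argument of \cite{AX17} for $n\ge 8$. I would apply Lemma \ref{h inequality} with the test function $\vp=u^{-\gamma}$ and the parameter relation $\alpha=k\gamma$ to obtain
\[
0\le\int_{\partial\Omega}\Bigl[|\D h|\,u^{\alpha-\gamma}-\tfrac{4k-(n-1)k^2}{4}u^{2\alpha-2\gamma}e^{-kf}h^2-(n-1)\lambda e^{kf}\Bigr]u^{-\gamma}e^{-(k+1)f}.
\]

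Next I would keep $k$ free subject only to $0<k\le 1$ and $4k-(n-1)k^2>0$, and introduce the constants
\[
C=\frac{4k-(n-1)k^2}{4}\cdot\frac{(\sup_Mu)^{2\alpha-2\gamma}}{(\inf_Mu)^{\alpha-\gamma}}\,e^{-kf_{max}},\qquad D=(n-1)\lambda\,(\inf_Mu)^{\gamma-\alpha}\,e^{kf_{min}},
\]
so that the above integrand is dominated by $(|\D h|-Ch^2-D)$ times a positive weight. Lemma \ref{h lemma} then produces the desired contradiction whenever $\diam(M,g)>\pi/\sqrt{CD}$, forcing $\diam(M,g)\le\pi/\sqrt{CD}$. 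Substituting the definitions of $C$ and $D$ and using $\gamma-\alpha=(1-k)\gamma$ together with $\sqrt{4/(n-1)}\cdot\pi=2\pi/\sqrt{n-1}$ yields precisely the bound claimed in Theorem \ref{resultk}.

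In effect the theorem is just a rearrangement of the intermediate inequality \eqref{genest} already extracted in the proof of Theorem \ref{result}(1); no essentially new analytic step is required. The only work is algebraic bookkeeping: tracking $\alpha=k\gamma$ through the various exponents and cleanly combining the factor $\sqrt{4/(n-1)}$ coming from the trace inequality $|\mathrm{II}|^2\ge H^2/(n-1)$ with the $\pi/\sqrt{\lambda}$ from Lemma \ref{h lemma}. Accordingly there is no genuine obstacle; the content of the statement is simply that keeping $k$ as a free parameter sometimes gives a sharper bound than the specific choice $k=2/(n-1)$ used in Theorem \ref{result}(1), and one should retain the general inequality for later optimization.
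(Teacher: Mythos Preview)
Your proposal is correct and matches the paper's own argument essentially line for line: the paper explicitly states Theorem~\ref{resultk} as the general form of the intermediate inequality \eqref{genest} obtained in the proof of Theorem~\ref{result}(1), with the only change being that $k$ is left free rather than specialized to $2/(n-1)$. Your identification of the constants $C$, $D$, the use of Lemmas~\ref{h inequality} and~\ref{h lemma}, and the algebraic substitution $\gamma-\alpha=(1-k)\gamma$ all coincide with the paper's treatment.
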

In particular, if we let $u\equiv 1$ in Theorem \ref{resultk} and
get the following diameter estimate of the weighted manifold.
\begin{corollary}\label{cordia}
Let $(M,g,e^{-f}dv)$ be a complete $n$-dimensional ($n\geq3$) weighted manifold
satisfying $\Ric_f\ge(n-1)\lambda $ for some constant $\lambda>0$. If
$F=\|f\|_{C^0(M)}<\infty$, then
\begin{equation}\label{genest}
\mathrm{diam}(M,g)\le\sqrt{\frac{e^{k(f_{max}-f_{min})}}{(n-1)k-\frac{(n-1)^2}{4}k^2}}\cdot\frac{\pi}{\sqrt{\lambda}},
\end{equation}
where $0<k\le 1$ and $4k-(n-1)k^2>0$. Here $f_{max}:=\sup_{x\in M} f(x)$ and $f_{min}:=\inf_{x\in M} f(x)$.
\end{corollary}

Under the same assumption of Corollary \ref{cordia},  Wei-Wylie \cite{WW09}
proved that $\mathrm{diam}(M,g)\le(1+\frac{4F}{(n-1)\pi})\cdot\frac{\pi}{\sqrt{\lambda}}$;
Limoncu \cite{Lim12} proved that $\mathrm{diam}(M,g)\le \sqrt{1+\frac{2\sqrt{2}F}{n-1}}\cdot\frac{\pi}{\sqrt{\lambda}}$;
 Tadano \cite{Tad16} proved that $\mathrm{diam}(M,g)\le \sqrt{1+\frac{8 F}{(n-1)\pi}}\cdot\frac{\pi}{\sqrt{\lambda}}$. Compared with these
estimates, our estimate is possibly sharper than them in some cases.

\section{Global weighted volume comparison}\label{volcomp}
In this section, we prove the second part of Theorem \ref{result} by following
the arguments of \cite{AX17,CH}. To achieve this goal, we first recall the following
ODE comparison appeared in Lemma 4 of \cite{AX17}.
\begin{lemma}\label{ODE comparison}
If $J:[0,V_{1})\to\R$ is a continuous function, where $V_1>0$ is a
constant, such that
\begin{itemize}
\item[(1)] $J(0)=0$ and $J(v)>0$ for any $v\in(0,V_1)$,
\item[(2)] $\limsup_{v\to0^+}v^{-\frac{n-1}{n}}J(v)\le n|\mathbb{B}^n|^{\frac 1n}$,
where $|\mathbb{B}^n|$ is the Riemannian volume of the unit ball $\mathbb{B}^n$
in $\R^n$,
\item[(3)] $J''J\le-\frac{(J')^2}{n-1}-(n-1)\Lambda$ in the viscosity sense on
$(0,V_1)$ for some constant $\Lambda>0$,
\end{itemize}
then
\[
V_1\le\Lambda^{-\frac n2}|\S^n|.
\]
\end{lemma}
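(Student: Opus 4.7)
The plan is to run an ODE comparison argument with the isoperimetric profile $I_\Lambda$ of the model sphere $\S^n$ of constant sectional curvature $\Lambda$ (total volume $\Lambda^{-n/2}|\S^n|$). Writing $I_\Lambda$ in polar coordinates, one checks that it saturates the differential inequality as an equality on $(0,\Lambda^{-n/2}|\S^n|)$, vanishes at both endpoints, and satisfies the Euclidean initial growth $I_\Lambda(v)\sim n|\mathbb{B}^n|^{1/n}v^{(n-1)/n}$ as $v\to 0^+$, matching exactly the hypothesized bound on $J$. Thus the lemma reduces to the statement that no admissible $J$ can ``beat'' the model, the upper bound $V_1\le\Lambda^{-n/2}|\S^n|$ being precisely the length of the interval on which $I_\Lambda>0$.

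To execute this comparison I would first simplify the ODE via the substitution $\phi:=J^{n/(n-1)}$. A short computation yields, in the viscosity sense on $(0,V_1)$,
\[
\phi''=\frac{n}{n-1}\,J^{(2-n)/(n-1)}\Big[JJ''+\tfrac{(J')^2}{n-1}\Big]\le -n\Lambda\,\phi^{(2-n)/n},
\]
while the initial data translate into $\phi(0)=0$ and $\phi'(0^+)\le(n|\mathbb{B}^n|^{1/n})^{n/(n-1)}$. Multiplying formally by $\phi'$ (noting $\phi^{(2-n)/n}\phi'=\tfrac{n}{2}(\phi^{2/n})'$) shows that the ``energy''
\[
E(v):=(\phi'(v))^2+n^2\Lambda\,\phi(v)^{2/n}
\]
is nonincreasing on any interval where $\phi'\ge 0$ and nondecreasing where $\phi'\le 0$. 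The model sphere saturates $E$ as a constant, equal to the right-hand side $(n|\mathbb{B}^n|^{1/n})^{2n/(n-1)}$ of the initial bound.

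With $E$ in hand, the bound on $V_1$ comes in two pieces. If $\phi$ attains a maximum at some $v^*\in[0,V_1)$, then $\phi'(v^*)=0$ and $E(v^*)\le E(0^+)$ pin down $\phi(v^*)\le(E(0^+)/(n^2\Lambda))^{n/2}$. On $[0,v^*]$ the inequality $(\phi')^2\ge E(v^*)-n^2\Lambda\phi^{2/n}$ followed by the substitution $\phi=\phi(v^*)\sin^n\theta$ yields
\[
v^*\le\frac{\phi(v^*)^{(n-1)/n}}{\sqrt{\Lambda}}\int_0^{\pi/2}\sin^{n-1}\theta\,d\theta\le\frac{|\S^n|}{2\Lambda^{n/2}},
\]
where the last step invokes $|\S^n|=2n|\mathbb{B}^n|\int_0^{\pi/2}\sin^{n-1}\theta\,d\theta$ together with the bound on $\phi(v^*)$. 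A mirror-image argument on $[v^*,V_1)$ contributes at most another $|\S^n|/(2\Lambda^{n/2})$, and adding gives the desired $V_1\le\Lambda^{-n/2}|\S^n|$. If $\phi$ is monotone throughout $[0,V_1)$, a one-sided version of the same argument gives an even sharper bound.

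The main obstacle will be justifying every differential manipulation in the viscosity sense, since $J$ is merely continuous and $\phi'$ may change sign at a point of non-differentiability. Following the template of \cite{AX17}, the standard route is to replace $J$ by smooth test supersolutions coming from the viscosity definition, establish the energy monotonicity classically for the approximants, and then pass to the viscosity limit; this procedure also reduces the ``supremum not attained'' scenario to a limit of the monotone subcase. The remaining bookkeeping---matching constants via $|\S^{n-1}|=n|\mathbb{B}^n|$ and the Wallis evaluation of $\int_0^{\pi/2}\sin^{n-1}\theta\,d\theta$---is routine.
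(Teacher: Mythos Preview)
The paper does not supply its own proof of this lemma; it merely quotes it as Lemma~4 of \cite{AX17}. Your argument via the substitution $\phi=J^{n/(n-1)}$ and the first-integral quantity $E=(\phi')^2+n^2\Lambda\,\phi^{2/n}$ is a correct route to the conclusion, and the constants match as you indicate.

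The only place worth tightening is the monotone case. On the increasing branch the \emph{decrease} of $E$ gives, at first sight, only an upper bound on $\phi'$, which would naively produce a lower bound on the length of the interval rather than the upper bound you want. The remedy is to use the inequality in the other direction: for any $\delta>0$ one has $E(v)\ge E(V_1-\delta)\ge n^2\Lambda\,\phi(V_1-\delta)^{2/n}$ on $[0,V_1-\delta]$, whence $(\phi')^2\ge n^2\Lambda\big(\phi(V_1-\delta)^{2/n}-\phi^{2/n}\big)$, and your substitution $\phi=\phi(V_1-\delta)\sin^n\theta$ then bounds $V_1-\delta$ by $|\S^n|/(2\Lambda^{n/2})$ exactly as in the two-sided case; letting $\delta\to0$ gives the sharper bound you assert. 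The viscosity technicalities you flag are in fact mild here: the inequality $\phi''\le0$ in the viscosity sense already forces $\phi$ to be concave, so $\phi'$ exists as a decreasing (hence BV) function and the monotonicity of $E$ can be verified directly, including across the at most countably many downward jumps of $\phi'$.
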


Without loss of generality, we assume that $\inf_Mu=1$. We set
\[
\alpha=\frac{2\gamma}{n-1}\quad \mathrm{and}\quad k=\frac{2}{n-1}.
\]
By Theorem \ref{result}(1), $M$ is compact and hence
\[
V_0:=\int_Mu^\alpha e^{-(k+1)f}<\infty.
\]
Define the following unequally weighted isoperimetric profile
\begin{equation}\label{isoprofile}
I(v):=\inf\left\{\int_{\p^*E}u^\gamma e^{-f}\,\Big|\,\text{$E\Subset M$
has finite perimeter and $\int_Eu^\alpha e^{-(k+1)f}=v$}\right\}
\end{equation}
for all $v\in[0,V_0)$, where $\p^*E$ is the reduced boundary of $E$. Then
\begin{equation}\label{ODE comparison condition a}
I(0)=0 \quad\text{and}\quad I(v)>0\quad \text{for} \,\, v\in(0,V_0).
\end{equation}
By the argument of Proposition 5.3 in \cite{CLMS24}, $I$ is continuous on $[0,V_0)$.
Moreover, we have

\begin{lemma}\label{limestiI}
Under the same assumption of Theorem \ref{result}, there exists
a finite perimeter set $B_r(x_0)\Subset M$, where $x_0\in M$
and $r>0$, such that $I$ defined as in \eqref{isoprofile} satisfies
\begin{equation}\label{ODE comparison condition b}
\displaystyle{\limsup_{v\to0^+}}\,v^{-\frac{n-1}{n}}I(v)
\le n|\mathbb{B}^n|^{\frac 1n}e^{\frac{1}{n}f_{max}}.
\end{equation}
\end{lemma}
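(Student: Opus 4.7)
The plan is to test \eqref{isoprofile} with small geodesic balls centered at a carefully chosen point. Since $M$ is compact by Theorem \ref{result}(1) and $\inf_M u = 1$ is attained (as we have normalized), pick $x_0\in M$ with $u(x_0) = 1$, and consider the family $\{B_r(x_0)\}_{r>0}$ with $B_r(x_0)\Subset M$ as $r\to 0^+$.

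The standard Riemannian expansions $\vol_g(B_r(x_0)) = |\mathbb{B}^n|\,r^n(1 + O(r^2))$ and $\mathcal{H}^{n-1}(\partial B_r(x_0)) = n|\mathbb{B}^n|\,r^{n-1}(1 + O(r^2))$, together with continuity of $u$ and $f$ at $x_0$, would yield
\begin{align*}
v(r) &:= \int_{B_r(x_0)} u^\alpha e^{-(k+1)f}\,dv = e^{-(k+1)f(x_0)}\,|\mathbb{B}^n|\,r^n\,(1+o(1)), \\
P(r) &:= \int_{\partial B_r(x_0)} u^\gamma e^{-f}\,d\mathcal{H}^{n-1} = e^{-f(x_0)}\,n|\mathbb{B}^n|\,r^{n-1}\,(1+o(1)),
\end{align*}
as $r\to 0^+$, where the factors $u(x_0)^\alpha = u(x_0)^\gamma = 1$ disappear thanks to the choice of $x_0$. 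In particular $v(r)$ is continuous and strictly increasing from $0$ on a small interval, so $v(r)$ sweeps a range $(0,v_0)$, and $I(v(r)) \le P(r)$ by definition of the infimum in \eqref{isoprofile}.

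Solving the first line for $r^{n-1}$ in terms of $v$ and substituting into the second, I would obtain
\[
v^{-\frac{n-1}{n}}\,I(v) \;\le\; n|\mathbb{B}^n|^{\frac{1}{n}}\,e^{-f(x_0) + \frac{(k+1)(n-1)}{n} f(x_0)}\,(1+o(1)),
\]
and using $k+1 = \tfrac{n+1}{n-1}$ one checks $-1 + \tfrac{(k+1)(n-1)}{n} = \tfrac{1}{n}$, which gives the target bound $n|\mathbb{B}^n|^{1/n} e^{f(x_0)/n} \le n|\mathbb{B}^n|^{1/n} e^{f_{max}/n}$. The main point to verify carefully is the uniform asymptotic behavior of the weighted volume and perimeter integrals, which reduces to continuity of $u$ and $f$ and the smooth expansion of the metric in normal coordinates at $x_0$; the one substantive design choice is placing $x_0$ at a minimum of $u$, since this kills the factor $u(x_0)^{\gamma(n-2)/n}$ that would otherwise appear in the final exponent and leaves only the $e^{f_{max}/n}$ contribution from the weight.
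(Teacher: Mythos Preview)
Your proposal is correct and matches the paper's argument essentially line for line: choose $x_0$ where $u(x_0)=\inf_M u=1$ (using compactness from the already-proved diameter bound), expand the weighted volume and perimeter of small geodesic balls at $x_0$, and use $k=\tfrac{2}{n-1}$ to reduce the exponent on $e^{f(x_0)}$ to $\tfrac{1}{n}$. Your treatment is in fact slightly more explicit than the paper's, since you carry the factors $u^\alpha,u^\gamma$ through the expansions and note that they vanish because $u(x_0)=1$, whereas the paper writes the expansions for $\int e^{-(k+1)f}$ and $\int e^{-f}$ directly and leaves this step implicit.
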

\begin{proof}[Proof of Lemma \ref{limestiI}]
Since $\inf_Mu=1$ and $M$ is compact, then there exists a point $x_0\in M$
satisfying $u(x_0)=1$. When $r\ll1$, we have the asymptotical expansion
\[
\int_{B(x_0,r)}e^{-(k+1)f}=|\mathbb{B}^n|r^ne^{-(k+1)f(x_0)}+O(r^{n+1})
\]
and
\[
\int_{\de B(x_0,r)} e^{-f}=n|\mathbb{B}^n|r^{n-1}e^{-f(x_0)}+O(r^{n}).
\]
From the definition of $I$, when $v\ll1$, we have
\[
\begin{split}
I(v)\le& n|\mathbb{B}^n|^{\frac 1n}v^{\frac{n-1}{n}}e^{\frac{(n-1)(k+1)-n}{n}f(x_0)}+o(v^{\frac{n-1}{n}}) \\
=& n|\mathbb{B}^n|^{\frac 1n}v^{\frac{n-1}{n}}e^{\frac 1nf(x_0)}+o(v^{\frac{n-1}{n}}) \\
\le& n|\mathbb{B}^n|^{\frac 1n}v^{\frac{n-1}{n}}e^{\frac{1}{n}f_{max}}+o(v^{\frac{n-1}{n}}),
\end{split}
\]
where we used $k=\frac{2}{n-1}$ in the above second line. This implies
\eqref{ODE comparison condition b}.
\end{proof}

For each $v_0\in(0,V_0)$, we let $E$ be the volume-constrained minimizer of $v_0$, i.e.
\begin{equation}\label{volconstr}
\int_Eu^\alpha e^{-(k+1)f}=v_0\quad \text{and} \quad
\int_{\p^*E}u^\gamma e^{-f}=I(v_0).
\end{equation}
By the classical geometric measure theory (see e.g. Section 3.10 in \cite{Mor}),
$\de E$ is smooth when $n\leq 7$ while the singular part of $\de E$
has Hausdorff dimension no more than $n-8$.

\smallskip

For the convenience of discussion, let us first consider the case $3\le n\le7$.
We may show that the above profile $I$ satisfies a good differential inequality.

\begin{lemma}\label{ODE comparison condition c}
Let $(M^n, g,e^{-f}dv)$ be a complete weighted manifold.
Let $\gamma,\alpha,V_0,I(v)$ be as above and let $\lambda>0$.
Assume that $u$ is a smooth function on $M$ satisfying
\[
\inf_M u=1 \quad \mathrm{and} \quad
u\,\Ric_f-\gamma\Delta_fu \geq (n-1)\lambda u.
\]
Suppose for a fixed $v_0\in(0,V_0)$, there exists a bounded set $E$ with
finite perimeter satisfying \eqref{volconstr}. If $F:=\|f\|_{C^0(M)}<\infty$,
then $I$ satisfies
\[
I''I\le-\frac{(I')^2}{n-1}-(n-1)\lambda e^{2kf_{min}}
\]
in the viscosity sense at $v_0$.
\end{lemma}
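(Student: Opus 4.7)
The plan is to produce a smooth upper barrier for $I$ at $v_0$ by normally deforming the constrained minimizer $E$ of \eqref{volconstr}, and to extract the ODE from the second variation of the weighted perimeter combined with the spectral curvature bound, in the spirit of \cite{AX17,CLMS24}.

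First, since $E$ realizes the infimum defining $I(v_0)$, a Lagrange-multiplier argument gives a constant $c_0\in\mathbb{R}$ such that $H_f+\gamma u^{-1}u_\nu=c_0u^{\alpha-\gamma}e^{-kf}$ on $\partial E$, which is the Euler-Lagrange equation \eqref{H} with $h$ replaced by the constant $c_0$. The value $c_0$ plays the role of $I'(v_0)$: for any smooth upper barrier $\psi$ of $I$ at $v_0$, a two-sided variation argument reduces the viscosity test to $\psi'(v_0)=c_0$. Next, flow $\partial E$ along a normal vector field $\varphi\nu$ for a suitably chosen $\varphi$, producing $\{E_t\}$; set $v(t)=\int_{E_t}u^\alpha e^{-(k+1)f}$ and $P(t)=\int_{\partial E_t}u^\gamma e^{-f}$. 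The defining property of $I$ gives $P(t)\ge I(v(t))$ with equality at $t=0$, and expanding to second order at $t=0$ yields the viscosity inequality $I''(v_0)\le L''(0)/(v'(0))^2$, where $L=P-c_0V_w$ is the Lagrangian with $V_w=\int u^\alpha e^{-(k+1)f}$.

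The second variation $L''(0)$ is exactly the expression \eqref{varia2nd} applied with $h=c_0$ constant (so the $|\nabla h|$-term vanishes). Applying the trace inequality $|II|^2\ge H^2/(n-1)$, Cauchy-Schwarz to absorb the $f_\nu$ cross-terms, and the spectral bound $u\,\Ric_f-\gamma\Delta_f u\ge(n-1)\lambda u$ in the algebra parallel to the proof of Lemma \ref{h inequality} produces the integral upper bound
\[
L''(0)\le\int_{\partial E}\Bigl[-\tfrac{4k-(n-1)k^2}{4}\,c_0^2\,u^{2\alpha-2\gamma}e^{-kf}-(n-1)\lambda\,e^{kf}\Bigr]\varphi^2\,u^\gamma e^{-f}.
\]
With $k=\frac{2}{n-1}$, one has $\frac{4k-(n-1)k^2}{4}=\frac{1}{n-1}$. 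Choosing $\varphi$ so that both the denominator $(v'(0))^2$ and the integrals against $\varphi^2\,u^\gamma e^{-f}$ are matched to $I(v_0)$ via Cauchy-Schwarz together with the pointwise bounds $e^{kf}\ge e^{kf_{min}}$ and $\inf_Mu=1$, we obtain the viscosity-sense ODE
\[
I''(v_0)\,I(v_0)\le-\frac{c_0^2}{n-1}-(n-1)\lambda e^{2kf_{min}}
\]
at $v_0$.

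The main obstacle is the concrete choice of $\varphi$ and the Cauchy-Schwarz matching that cleanly brings both the denominator $(v'(0))^2$ and the two boundary integrals (weighted by $u^{2\alpha-2\gamma}e^{-kf}$ and $e^{kf}$ respectively) back to $I(v_0)$ with the correct $e^{2kf_{min}}$ prefactor. The factor $e^{2kf_{min}}$ arises from two applications of $e^{kf}\ge e^{kf_{min}}$: one on the spectral term directly and one on the weighted volume density when converting $\int u^\alpha e^{-(k+1)f}$ back to the perimeter density $u^\gamma e^{-f}$ used by $I(v_0)$. Managing the second-order correction to $\varphi$ (to extend it from $\partial E$ to a full one-parameter family of variations) so as not to contaminate the leading terms of $L''(0)$ is also a necessary but routine piece of bookkeeping.
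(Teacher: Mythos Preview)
Your approach is essentially the paper's: deform the constrained minimizer $E$ to build a smooth upper barrier $G(v)=A(V^{-1}(v))$ for $I$, identify $G'(v_0)$ with the Lagrange multiplier $c_0$, and bound $G''(v_0)=\big(A''(0)-c_0\,V''(0)\big)/V'(0)^2$ via the second-variation algebra of Lemma~\ref{h inequality} with $h$ replaced by the constant $c_0$.

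Two small corrections to your write-up. First, the test function $\varphi=u^{-\gamma}$ is \emph{forced} by the Lemma~\ref{h inequality} computation (the simplifications \eqref{estimate1}--\eqref{estimate2} fail otherwise), so it is not an additional choice you get to optimize later; with this $\varphi$ the measure in your displayed bound should be $u^{-\gamma}e^{-(k+1)f}$, not $\varphi^2u^\gamma e^{-f}$. Second, the endgame is slightly different from what you sketch: after the bound $A''(0)-c_0V''(0)\le\int_{\partial E}\big[-(n-1)\lambda-\tfrac{1}{n-1}c_0^2u^{2\alpha-2\gamma}e^{-2kf}\big]u^{-\gamma}e^{-f}$, one uses a \emph{single} pointwise inequality $u^{2\gamma-2\alpha}e^{2kf}\ge e^{2kf_{\min}}$ (valid since $\inf_Mu=1$ and $\gamma\ge\alpha$) to pass $\int u^{-\gamma}e^{-f}\ge e^{2kf_{\min}}\int u^{2\alpha-3\gamma}e^{-(2k+1)f}$, and then one Cauchy--Schwarz $(\int u^{\alpha-\gamma}e^{-(k+1)f})^2\le(\int u^{2\alpha-3\gamma}e^{-(2k+1)f})(\int u^\gamma e^{-f})$ to land on $I(v_0)$ in the denominator. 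Your ``two applications of $e^{kf}\ge e^{kf_{\min}}$'' description does not quite close; the correct route pairs the $\lambda$-term and the $c_0^2$-term against the \emph{same} auxiliary integral before applying Cauchy--Schwarz once.
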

\begin{proof}[Proof of Lemma \ref{ODE comparison condition c}]
For $v_0\in(0,V_0)$, we let $E$ be the volume-constrained minimizer of $v_0$.
For any $\vp\in C^{\infty}(M)$, we let $\{E_t\}_{t\in(-\e,\e)}$ be a smooth
family of sets satisfying $E_0=E$ and $\frac{\de E_t}{\de t}=\vp\nu$ at
$t=0$, where $\nu$ denotes the outer unit normal vector of $\de E_t$. Set
\[
V(t):=\int_{E_t}u^\alpha e^{-(k+1)f}\quad  \text{and} \quad
A(t):=\int_{\p E_t}u^\gamma e^{-f}.
\]
We directly compute that
\[
V'(0)=\int_{\p E}u^\alpha e^{-(k+1)f}\vp,
\]
\[
V''(0)=\int_{\p E}\left(H_f+\alpha u^{-1}u_{\nu}-kf_{\nu}\right)u^\alpha e^{-(k+1)f}\vp^2
+u^{\alpha}e^{-(k+1)f}\vp\vp_{\nu}
\]
and
\[
A'(0)=\int_{\p E}(H_f+\gamma u^{-1}u_\nu)u^\gamma e^{-f}\vp,
\]
\[
\begin{split}
A''(0)=\int_{\p E}\Big[&-\Delta_{\p E}\vp-\Ric_f(\nu,\nu)\vp-|\mathrm{II}|^2\vp
+\langle\D_{\de E}f,\D_{\p E}\vp\rangle\\
&+\gamma u^{-1}\vp(\Delta u-\Delta_{\p E}u-Hu_\nu)-\gamma u^{-2}u_\nu^2\vp
-\gamma u^{-1}\langle\D_{\p\Omega}u,\D_{\p\Omega}\vp\rangle\Big]u^\gamma e^{-f}\vp\\[1mm]
&+\big(H_f+\gamma u^{-1}u_\nu\big)\big(\gamma u^{-1}u_\nu \vp+\vp_\nu+H_f\vp\big)u^{\gamma}e^{-f}\vp.
\end{split}
\]
By the preceding calculation, we observe that
\[
-\Ric_f(\nu,\nu)\varphi+\gamma u^{-1}\varphi\Delta u
=\big(\gamma\Delta_fu-u\,\Ric_f(\nu,\nu)\big)u^{-1}\varphi+\gamma u^{-1}\varphi\langle\D u,\D f\rangle.
\]
So we can further simplify the expression of $A''(0)$ as
\[
\begin{split}
A''(0)=\int_{\p E}\Big[&-\Delta_{\p E}\vp-|\mathrm{II}|^{2}\vp+\big(\gamma\Delta_{f}u-u\Ric_{f}(\nu,\nu)\big)u^{-1}\varphi\\
&+\gamma u^{-1}\varphi\langle\D u,\D f\rangle+\langle\D_{\p E}f,\D_{\p E}\vp\rangle-\gamma u^{-2}u_\nu^2\vp\\
&+\gamma u^{-1}\vp(-\Delta_{\p E}u-Hu_\nu)-\gamma u^{-1}\langle\D_{\p\Omega}u,\D_{\p\Omega}\vp\rangle \\
&+(H_f+\gamma u^{-1}u_\nu\big)\big(\gamma u^{-1}u_\nu \vp+\vp_\nu+H_f\vp)\Big]u^\gamma e^{-f}\vp.
\end{split}
\]

Since $E$ is a volume-constrained minimizer, we see that if
\[
V'(0)=\int_{\p E}u^\alpha e^{-(k+1)f}\varphi=0,
\]
then
\[
A'(0)=\int_{\p E}\mathcal{H}_f\cdot u^\alpha e^{-(k+1)f}\vp=0.
\]
where $\mathcal{H}_f:=(H_f+\gamma u^{-1}u_\nu)u^{\gamma-\alpha}e^{kf}$.
Since $\vp$ is arbitrary, if we particularly let
\[
u^\alpha e^{-(k+1)f}\varphi=\mathcal{H}_f-\overline{\mathcal{H}}_f,
\]
where $\overline{\mathcal{H}}_f=\frac{1}{|\p E|}\int_{\p E}\mathcal{H}_f$,
then $V'(0)=0$ always holds and $A'(0)=0$ gives that
\[
\overline{\mathcal{H}_f}\int_{\p E}\mathcal{H}_f=\int_{\p E}\mathcal{H}_f^2
\ge \frac{1}{|\p E|}\left(\int_{\p E}\mathcal{H}_f\right)^2,
\]
where we used the Cauchy-Schwarz inequality in the second inequality.
Combining this with the definition of $\overline{\mathcal{H}}_f$, we
see that the above inequality actually becomes equality and hence $\overline{\mathcal{H}}_f$ is
constant on $\p E$. This shows that
\[
\text{$u^{\gamma-\alpha}(H_f+\gamma u^{-1}u_\nu)e^{kf}$ is constant on $\p E$.}
\]

Now, we choose $\vp=u^{-\gamma}$ and then
\[
V'(0)=\int_{\p E}u^{\alpha-\gamma}e^{-(k+1)f}>0.
\]
Hence $V(t)$ has the inverse function in a neighbourhood of $0$, and we can define
\[
G(v):=A(V^{-1}(v)).
\]
Noticing that
\[
G(v_0)=I(v_0)\quad \text{and} \quad G(v)\ge I(v)
\]
in some neighbourhood of $v_0$, so function $G$ is an upper barrier of $I$ at $v_0$.
To prove Lemma \ref{ODE comparison condition c}, it suffices to show
\begin{equation}\label{goal G}
G(v_{0})G''(v_{0}) \leq -\frac{G'^{2}(v_{0})}{n-1}-(n-1)\lambda e^{2kf_{min}}.
\end{equation}

By the chain rule and the formula for the derivative of inverse function,
we observe that
\begin{equation}\label{G'}
G'(v_0)=A'(0)(V^{-1})'(v_0)=\frac{A'(0)}{V'(0)}=u^{\gamma-\alpha}(H_f+\gamma u^{-1}u_\nu)e^{kf},
\end{equation}
where we used the fact that $u^{\gamma-\alpha}(H_f+\gamma u^{-1}u_\nu)e^{kf}$
is constant on $E$, and
\[
G''(v_0)=\frac{A''(0)}{V'^2(0)}-\frac{A'(0)V''(0)}{V'^3(0)}
=\frac{A''(0)-G'(v_0)V''(0)}{V'^2(0)}.
\]

From above, to prove \eqref{goal G}, we first compute $V''(0)$ and $A''(0)$.
It is clear that
\[
V''(0)=\int_{\p E}\Big(H_f-kf_{\nu}+(\alpha-\gamma)u^{-1}u_{\nu}\Big)u^{\alpha-2\gamma}e^{-(k+1)f}.
\]
For $A''(0)$, the same calculation as \eqref{estimate1} and \eqref{estimate2} gives that
\[
-\Delta_{\p E}\varphi-\gamma u^{-1}\varphi\Delta_{\p E}u
-\gamma u^{-1}\langle\D_{\p E} u,\D_{\p E}\vp\rangle
=-\gamma u^{-\gamma-2}|\D_{\p E}u|^2
\le 0
\]
and
\[
\gamma u^{-1}\varphi\langle\D u,\D f\rangle+\langle\D_{\p E}f,\D_{\p E}\varphi\rangle
=\gamma u^{-\gamma-1}u_{\nu}f_{\nu}.
\]
Combining the above two relations with the trace inequality $|\mathrm{II}|^2\ge\frac{H^2}{n-1}$
and $u\,\Ric_f-\gamma\Delta_fu \geq (n-1)\lambda u$, we hence get
\[
\begin{split}
A''(0)&\le\int_{\p E}\Big[\frac{-H^2u^{-\gamma}}{n-1}{-}(n-1)\lambda u^{-\gamma}
{-}\gamma u^{-\gamma-2}u_\nu^2{-}\gamma u^{-\gamma-1}H_fu_\nu
{+}H_fu^{-\gamma}(H_f{+}\gamma u^{-1}u_\nu)\Big]e^{-f}\\
&=\int_{\p E}\Big[\frac{-H^2}{n-1}-(n-1)\lambda-\gamma u^{-2}u_\nu^2
-\gamma u^{-1}H_fu_\nu+H_f(H_f+\gamma u^{-1}u_\nu)\Big] u^{-\gamma}e^{-f}.
\end{split}
\]

In the following, we will apply the above formulas to check \eqref{goal G}.
For simplicity, let
\[
X=G'(v_0)u^{\alpha-\gamma}e^{-kf}\quad \mathrm{and}\quad  Y=u^{-1}u_\nu.
\]
Then \eqref{G'} implies
\[
H_f=X-\gamma Y.
\]
We compute that
\[
\begin{split}
G'(v_0)V''(0)=&\int_{\p E}u^{\gamma-\alpha}e^{kf}X\cdot\Big(X-kf_{\nu}
+(\alpha-2\gamma)Y\Big)u^{\alpha-2\gamma}e^{-(k+1)f}\\
=&\int_{\p E}\Big(X^2-kXf_{\nu}+(\alpha-2\gamma)XY\Big)u^{-\gamma}e^{-f}
\end{split}
\]
and
\[
\begin{split}
A''(0)\le\int_{\p E}\bigg[&-\frac{(f_{\nu}+X-\gamma Y)^2}{n-1}-(n-1)\lambda+\gamma Yf_{\nu}\\
&-\gamma Y^2-\gamma Y(f_{\nu}+X-\gamma Y)+X(X-\gamma Y)\bigg]u^{-\gamma}e^{-f} \\
=\int_{\p E}\bigg[&-(n-1)\lambda-\frac{1}{n-1}f_{\nu}^2-\frac{2}{n-1}Xf_{\nu}
+\frac{2\gamma}{n-1}Yf_{\nu} \\
&+\Big(1-\frac{1}{n-1}\Big)X^2+\Big(\frac{2\gamma}{n-1}-2\gamma\Big)XY
+\Big(\frac{n-2}{n-1}\gamma^2-\gamma\Big)Y^2\bigg]u^{-\gamma}e^{-f}.
\end{split}
\]
Combining two formulas above gives that
\[
\begin{split}
 A''(0){-}G'(v_0)V''(0)
&\le\int_{\p E}\bigg[-(n-1)\lambda-\frac{f_{\nu}^2}{(n-1)}
+\Big(k{-}\frac{2}{n-1}\Big)Xf_{\nu}+\frac{2\gamma}{n-1}Yf_{\nu}\\
&\qquad\qquad-\frac{X^2}{n-1}+\Big(\frac{2\gamma}{n-1}-\alpha\Big)XY
+\Big(\frac{n-2}{n-1}\gamma^2-\gamma\Big)Y^2\bigg]u^{-\gamma}e^{-f}.
\end{split}
\]
Since $k=\frac{2}{n-1}$ and $\alpha=\frac{2\gamma}{n-1}$, then
\[
\begin{split}
&A''(0)-G'(v_0)V''(0)\\
&\le\int_{\p E}\bigg[-(n-1)\lambda-\frac{X^2}{n-1}-\frac{f_{\nu}^2}{n-1}
+\frac{2\gamma}{n-1}Yf_{\nu}+\Big(\frac{n-2}{n-1}\gamma^2-\gamma\Big)Y^2\bigg]u^{-\gamma}e^{-f}.
\end{split}
\]
Applying $-\frac{1}{n-1}f_{\nu}^2+\frac{2\gamma}{n-1}Yf_{\nu}\le\frac{\gamma^2}{n-1}Y^2$
and $0\le\gamma\le 1$, we further get
\begin{equation*}
\begin{split}
A''(0)-G'(v_0)V''(0)&\le\int_{\p E}\bigg[-(n-1)\lambda-\frac{X^2}{n-1}
+\Big(\gamma^2-\gamma\Big)Y^2\bigg]u^{-\gamma}e^{-f}\\
&\le\int_{\p E}\Big[-(n-1)\lambda-\frac{X^2}{n-1}\Big]u^{-\gamma}e^{-f}.
\end{split}
\end{equation*}
Combining this with $V'(0)=\int_{\de E}u^{\alpha-\gamma} e^{-(k+1)f}$
and $X=G'(v_{0})u^{\alpha-\gamma}e^{-kf}$, we obtain that
\[
\begin{split}
G''(v_0)= &\frac{A''(0)-G'(v_0)V''(0)}{V'^2(0)}\\
\le &-(n-1)\lambda\cdot\frac{\int_{\p E}u^{-\gamma}e^{-f}}{\left(\int_{\p E}u^{\alpha-\gamma}e^{-(k+1)f}\right)^2}
-\frac{G'^2(v_0)}{n-1}\cdot\frac{\int_{\p E}u^{2\alpha-3\gamma}e^{-(2k+1)f}}{\left(\int_{\p E}u^{\alpha-\gamma}e^{-(k+1)f}\right)^2}.
\end{split}
\]
Since $\inf_Mu=1$ and $\gamma\ge\alpha$, we observe that
\[
\begin{split}
\int_{\p E}u^{-\gamma}e^{-f}
&=\int_{\p E}u^{2\gamma-2\alpha}e^{2kf} \cdot u^{2\alpha-3\gamma}e^{-(2k+1)f} \\
&\ge e^{2kf_{min}}\int_{\p E}u^{2\alpha-3\gamma}e^{-(2k+1)f}.
\end{split}
\]
Hence,
\[
G''(v_0) \leq \left[-(n-1)\lambda e^{2kf_{min}}
-\frac{G'^2(v_0)}{n-1}\right]\cdot\frac{\int_{\p E}u^{2\alpha-3\gamma}e^{-(2k+1)f}}{\left(\int_{\p E}u^{\alpha-\gamma}e^{-(k+1)f}\right)^2}.
\]
By the Cauchy-Schwarz inequality, we see that
\[
\begin{split}
\left(\int_{\p E}u^{\alpha-\gamma}e^{-(k+1)f}\right)^2
=& \left(\int_{\p E}u^{\alpha-\frac{3}{2}\gamma}e^{-k-\frac{1}{2}f} \cdot u^{\frac{1}{2}\gamma}e^{-\frac{1}{2}f}\right)^2\\
\le & \left(\int_{\p E}u^{2\alpha-3\gamma}e^{-(2k+1)f}\right)\left(\int_{\p E}u^{\gamma}e^{-f}\right),
\end{split}
\]
So
\[
G''(v_0)\le\left[-(n-1)\lambda e^{2kf_{min}}-\frac{G'^2(v_0)}{n-1}\right]
\cdot\frac{1}{\int_{\p E}u^{\gamma}e^{-f}}.
\]
Since
\[
G(v_0)=I(v_0)=\int_{\p E}u^{\gamma}e^{-f},
\]
then \eqref{goal G} follows.
\end{proof}

Finally we shall apply the preceding lemmas to prove Theorem \ref{result}(2).
\begin{proof}[Proof of (2) in Theorem \ref{result}]

We first prove the conclusion when $3\le n\le 7$. Combining Lemma \ref{ODE comparison},
\eqref{ODE comparison condition a}, Lemma \ref{limestiI} and Lemma
\ref{ODE comparison condition c}, we see that $J:=e^{-\frac{f_{max}}{n}}I$ is a continuous
function on $[0,V_1)$ and it satisfies conditions (1)-(3) in Lemma \ref{ODE comparison}
with $V_1=e^{-\frac{f_{max}}{n}}V_0$ and $\Lambda=\lambda e^{2kf_{min}-\frac{2}{n}f_{max}}$.
Moreover, the conclusion of Lemma \ref{limestiI} gives
\[
e^{-\frac{f_{max}}{n}}V_0\le\left(\lambda e^{2kf_{min}-\frac{2}{n}f_{max}}\right)^{-\frac n2}|\S^n|,
\]
which implies that
\[
V_0\le e^{(1+\frac 1n)f_{max}-knf_{min}}\lambda^{-\frac n2}|\S^n|.
\]
Since $\inf_M u=1$ and $\alpha\ge 0$, then
\[
V_0=\int_Mu^{\alpha}e^{-(k+1)f}\ge e^{-kf_{max}}\int_{M}e^{-f}=e^{-kf_{max}}|M|_f.
\]
Combining this and using $k=\frac{2}{n-1}$, the conclusion follows
when $3\le n\le7$.

When $n\ge8$, $\de E$ may have singularities. The global volume comparison is
proved by modifying the argument of the case $3\le n\le 7$ as Appendix A in \cite{AX17}.
\end{proof}


\section{Weighted spectral splitting theorem}\label{split}

In this section, we prove Theorem \ref{result2} in introduction by
following the arguments of \cite{APX24,Yeu}. To achieve this aim,
we need several technical lemmas, which will be given below.

Let $(M,g,e^{-f}dv)$ be a complete noncompact weighted manifold with
$|f|\le\theta$ for some constant $\theta\ge 0$ on $M$. For $q\in C^{\infty}(M)$
and a bounded domain $D\subset M$, let
\[
\lambda_{1,f}(D)<\lambda_{2,f}(D)\le\lambda_{3,f}(D)\le \cdots
\]
be the sequence of eigenvalues of $\Delta_f-q$ acting on functions vanishing
on $\partial D$. The variational characterization of $\lambda_{1,f}(D)$ is given by
\[
\lambda_{1,f}(D)=\inf\left\{\int_D\left(|\D u|^2+qu^2\right)e^{-f}dv \Big|\,\,
\mathrm{supp}\, u\subset D,\,  \int_D u^2 e^{-f}dv=1\right\}.
\]
Using this characterization, we have $\lambda_{1,f}(D_1)\ge\lambda_{1,f}(D_2)$
for $D_1\subset D_2$, where $D_i$ ($i=1,2$) are connected domains in $M$.
Furthermore, if $D_2\setminus D_1\neq \emptyset$, then $\lambda_{1,f}(D_1)>\lambda_{1,f}(D_2)$.
Besides, we have some standard 2nd order elliptic equation theories for
the weighted schrodinger equation $(\Delta_f-q)h=0$ in $(M,g,e^{-f}dv)$,
such as the strong maximum principle, the regularity theorem, the Harnack
inequality, the Schauder interior estimate, etc. (see Sections 6 and 8 in
\cite{GT98}). Therefore, we have the following equivalent statement
proved in Theorem 2.3 of \cite{Yeu}, slightly generalizing Theorem 1 in
\cite{FS80}.
\begin{lemma}\label{equvlem}
The following conditions are equivalent:
\begin{itemize}
\item [(1)]$\lambda_{1,f}(D)\ge 0$ for every bounded domain $D\subset M$,
\item [(2)]$\lambda_{1,f}(D)> 0$ for every bounded domain $D\subset M$,
\item[(3)]There exists a positive function $h$ satisfying $(\Delta_f-q)h=0$ on $M$.
\end{itemize}
\end{lemma}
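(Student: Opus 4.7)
The plan is to establish the cyclic implications $(2) \Rightarrow (1) \Rightarrow (3) \Rightarrow (1)$ together with $(1) \Rightarrow (2)$, since $(2) \Rightarrow (1)$ is immediate from the definitions. Throughout, I would lean on four tools that the paper explicitly lists as available: the variational characterization of $\lambda_{1,f}$, the strict monotonicity $\lambda_{1,f}(D_1) > \lambda_{1,f}(D_2)$ when $D_1 \subsetneq D_2$, the Harnack inequality for $(\Delta_f - q)$, and standard elliptic (Schauder) interior estimates.

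For $(1) \Rightarrow (2)$, given any bounded domain $D$, I would enlarge it to some bounded domain $D'$ with $D \Subset D'$. Assumption $(1)$ gives $\lambda_{1,f}(D') \ge 0$, and the strict monotonicity of $\lambda_{1,f}$ under proper inclusion noted in the paragraph preceding the lemma yields $\lambda_{1,f}(D) > \lambda_{1,f}(D') \ge 0$, which is $(2)$.

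For $(3) \Rightarrow (1)$, I would use the Fischer-Colbrie--Schoen substitution trick adapted to the weighted Laplacian. Fix $\varphi \in C_c^1(M)$ and a positive $h \in C^2(M)$ with $\Delta_f h = q h$. Testing the equation against $\varphi^2/h$ and integrating by parts with respect to the weighted measure $e^{-f} dv$ gives
\[
\int_M q \varphi^2 e^{-f} dv = \int_M \frac{|\nabla h|^2}{h^2} \varphi^2 e^{-f} dv - 2 \int_M \frac{\varphi}{h} \langle \nabla h, \nabla \varphi \rangle e^{-f} dv.
\]
Applying the elementary inequality $2\frac{\varphi}{h}\langle\nabla h,\nabla\varphi\rangle \le \frac{\varphi^2}{h^2}|\nabla h|^2 + |\nabla\varphi|^2$ pointwise collapses the right-hand side to $-\int_M |\nabla\varphi|^2 e^{-f} dv$ at worst, which is exactly $\int_M(|\nabla\varphi|^2 + q\varphi^2)e^{-f} dv \ge 0$. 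Taking the infimum over $\varphi$ supported in $D$ gives $\lambda_{1,f}(D) \ge 0$.

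For $(1) \Rightarrow (3)$, which I expect to be the main obstacle, I would choose an exhaustion $D_1 \Subset D_2 \Subset \cdots$ of $M$ by smooth bounded domains with $\bigcup_k D_k = M$, fix a basepoint $x_0 \in D_1$, and use the already-established implication $(1) \Rightarrow (2)$ to ensure $\lambda_{1,f}(D_k) > 0$ for every $k$. Positivity of the first Dirichlet eigenvalue implies that $\Delta_f - q$ satisfies the weak maximum principle on each $D_k$, so the Dirichlet problem
\[
(\Delta_f - q) h_k = 0 \text{ on } D_k, \qquad h_k = 1 \text{ on } \partial D_k
\]
is uniquely solvable; the strong maximum principle forces $h_k > 0$ on $D_k$. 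Normalize by setting $\tilde h_k := h_k/h_k(x_0)$, so that $\tilde h_k(x_0) = 1$. By the weighted Harnack inequality, $\tilde h_k$ is uniformly bounded above and below by positive constants on each fixed compact subset $K \subset M$ (for all large $k$ with $K \Subset D_k$). Schauder interior estimates then give uniform $C^{2,\alpha}_{\mathrm{loc}}$ bounds, and a standard diagonal/Arzelà-Ascoli argument produces a subsequence converging in $C^2_{\mathrm{loc}}$ to a function $h$ on $M$. The limit $h$ is positive (by the two-sided Harnack bound) and satisfies $(\Delta_f - q) h = 0$ on $M$, which is $(3)$. The main delicate point is verifying that $\tilde h_k$ really is uniformly bounded on compact sets: this is where it matters that we solved a genuine homogeneous Dirichlet problem rather than an eigenvalue equation (so no spurious $\lambda_\infty$ appears in the limit), and where the weighted Harnack inequality — valid because $|f|$ is locally bounded and $q$ is smooth — does the heavy lifting.
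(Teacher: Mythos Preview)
Your proposal is correct and is exactly the classical Fischer-Colbrie--Schoen argument carried over to the weighted Laplacian. The paper does not supply its own proof of this lemma; it simply records that the statement is ``proved in Theorem 2.3 of \cite{Yeu}, slightly generalizing Theorem 1 in \cite{FS80},'' and those references use precisely the scheme you outline: strict domain monotonicity for $(1)\Rightarrow(2)$, the substitution $\varphi\mapsto\varphi^2/h$ and Cauchy--Schwarz for $(3)\Rightarrow(1)$, and an exhaustion by bounded domains with Dirichlet solutions normalized at a basepoint, Harnack, and Schauder for $(1)\Rightarrow(3)$. One small point worth tightening: in the step ``the strong maximum principle forces $h_k>0$,'' the usual strong maximum principle needs a sign on $q$; what you actually use is that $\lambda_{1,f}(D_k)>0$ gives the refined (Berestycki--Nirenberg--Varadhan type) weak maximum principle, yielding $h_k\ge0$, and then the Harnack inequality upgrades this to strict positivity. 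With that clarification, the argument is complete and matches the cited proofs.
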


We also have the following constructional result appeared in Lemma 2.1
of \cite{APX24}, which will be used in the proof of Theorem \ref{result2}.
\begin{lemma}\label{bubfun}
For any $\e\in(0,\frac12)$, there exists a smooth function
$\bar h_{\e}:(-\frac{1}{\e},\frac{1}{\e})\to\mathbb{R}$ such that
\begin{enumerate}
\item $\bar h_{\e}'+\bar h_{\e}^2\ge{\e}^2$ on
$(-\frac{1}{\e},-1]\cup[1,\frac{1}{\e})$,

\item $|\bar h_{\e}'+\bar h_{\e}^2|\le C\e$ for a universal constant
$C>0$ on $[-1,1]$,

\item $(\bar h_{\e})'<0$ on $(-\frac{1}{\e},\frac{1}{\e})$,
$\bar h_\e(0)=0$ and $\lim_{x\to\pm\frac{1}{\e}} \bar h_{\e}(x)=\mp\infty$,

\item $\bar h_{\e}\to0$ smoothly as $\e\to0$ on any compact subset of $\R$.
\end{enumerate}
\end{lemma}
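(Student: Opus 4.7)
The plan is to glue together two explicit solutions of the Riccati ODE $h' + h^2 = \e^2$ via a smooth cutoff, obtaining a function that satisfies (1) with equality on the tails and has size $O(\e)$ in the middle.

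First I observe that $h' + h^2 = \e^2$ admits the explicit one-parameter family of solutions $-\e\coth(\e(x-c))$. Placing the two poles at $x=\pm 1/\e$, I define
\[
h_R(x) := -\e\coth(1-\e x),\qquad h_L(x) := \e\coth(1+\e x),
\]
both smooth on $(-1/\e,1/\e)$ since the argument of $\coth$ lies in $(0,2)$ there. Using $\coth^2 - \mathrm{csch}^2 = 1$ one checks $h_R'+h_R^2 = h_L'+h_L^2 = \e^2$ identically, and $h_R'<0$, $h_L'<0$. The function $h_R$ blows up to $-\infty$ at $x = 1/\e$, while $h_L$ blows up to $+\infty$ at $x=-1/\e$. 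Moreover the reflection identity $h_R(x)=-h_L(-x)$ holds by construction.

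Next I would fix a smooth non-decreasing cutoff $\rho:\R\to[0,1]$ with $\rho\equiv 0$ on $(-\infty,-1]$, $\rho\equiv 1$ on $[1,\infty)$, and the odd-type symmetry $\rho(-x)=1-\rho(x)$ (so in particular $\rho(0)=1/2$). Setting
\[
\bar h_\e(x) := \rho(x)\,h_R(x) + (1-\rho(x))\,h_L(x),
\]
I now verify each property. For (1), on $[1,1/\e)$ one has $\rho\equiv 1$, hence $\bar h_\e=h_R$ and $\bar h_\e'+\bar h_\e^2=\e^2$; the left tail is symmetric. For (3), combining the symmetries of $\rho$ and of the pair $(h_R,h_L)$ forces $\bar h_\e(-x)=-\bar h_\e(x)$, so in particular $\bar h_\e(0)=0$; the blowup at $\pm 1/\e$ is inherited from the tails; and monotonicity follows from
\[
\bar h_\e'(x) = \rho'(x)\bigl(h_R(x)-h_L(x)\bigr) + \rho(x) h_R'(x) + (1-\rho(x)) h_L'(x),
\]
since the convex combination $\rho h_R'+(1-\rho) h_L'$ is strictly negative, and the first term is non-positive as $\rho'\geq 0$ and $h_R - h_L = -\e[\coth(1-\e x)+\coth(1+\e x)]<0$ on $(-1/\e,1/\e)$.

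For (2), on $[-1,1]$ the uniform bounds $|h_R|,|h_L|\leq \e\coth(1-\e)$ and $|h_R'|,|h_L'|\leq \e^2\,\mathrm{csch}^2(1-\e)$ give $|\bar h_\e|=O(\e)$ and $|\bar h_\e'|=O(\e)$ (the $\rho'(h_R-h_L)$ term also being $O(\e)$ since $h_R-h_L=O(\e)$), hence $|\bar h_\e'+\bar h_\e^2|\leq C\e$ for a universal $C$. Property (4) is immediate from the same type of estimate applied to all derivatives on a fixed compact set, since $h_R,h_L$ and their derivatives are $O(\e)$ uniformly on any such set once $\e$ is small enough. I expect the only place where one must be careful is the verification of (2), where one needs to track how $\rho'$ interacts with the $O(\e)$ gap $h_R-h_L$ so as to stay at scale $\e$ rather than scale $1$; the remaining items are either identities produced by the construction itself or direct symmetry checks.
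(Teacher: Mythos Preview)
Your construction is correct: gluing the two Riccati solutions $h_R,h_L$ of $h'+h^2=\e^2$ with poles at $\pm 1/\e$ by a symmetric cutoff gives a function meeting all four requirements, and your verifications of (1)--(4) go through as stated. The only point worth a second look is (2), and there your bounds $|h_R|,|h_L|\le \e\coth(1-\e)\le \e\coth(1/2)$ and $|h'_R|,|h'_L|\le \e^2\operatorname{csch}^2(1/2)$ on $[-1,1]$ indeed yield $|\bar h_\e'+\bar h_\e^2|\le C\e$ with a universal $C$ depending only on the fixed cutoff $\rho$.

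Note however that the paper itself does \emph{not} supply a proof of this lemma: it simply quotes the statement from Lemma~2.1 of \cite{APX24} and uses it as a black box. So there is no in-paper argument to compare against; your explicit construction is exactly the kind of proof one would expect to find in the cited reference, and in fact the Riccati-solution-plus-cutoff approach is the standard way such barrier functions are built.
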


By Lemma \ref{equvlem}, we see that condition \eqref{speccond} of Theorem
\ref{result2} is equivalent to a fact that there exist $\alpha\in (0,1)$
and a positive function $u\in C^{2,\alpha}(M)$ such that $-\gamma\Delta_f u+u\,\Ric_f=0$.
For such $u$, we have the following perturbation result, which will
be used in constructing appropriate functions in the proof of Theorem
\ref{result2}.

\begin{lemma}\label{perturb}
Let $(M,g, e^{-f}dv)$ be an $n$-dimensional complete noncompact weighted manifold
with bounded $f$. Assume there exist a constant $\alpha\in (0,1)$ and
$0<u_0\in C^{2,\alpha}(M)$ such that $-\gamma\Delta_f u_0+q u_0=0$, where
$\gamma>0$ and $q\in\mathrm{Lip}_{\mathrm{loc}}(M)$. Then for any point $x\in M$
and $r\in(0,1)$, there exists $w\in C^{2,\alpha}(M)$ such that
\begin{enumerate}
\item $-2u_0\le w<0$ on $M$,
\item $-\gamma\Delta_f w+q w>0$ on $M\setminus B(x,r)$.
\end{enumerate}
\end{lemma}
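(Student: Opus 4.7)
The strategy is to start from $w_0 := -u_0$, which already satisfies (1) with slack and solves the \emph{homogeneous} equation $-\gamma\Delta_f w_0 + qw_0 = 0$ on $M$, and to add a small nonnegative perturbation $\epsilon v$ producing the strict positivity required in (2). Concretely, I would fix a smooth cutoff $\rho \in C^{\infty}(M)$ with $0 \le \rho \le 1$, $\rho \equiv 0$ on $B(x,r/2)$ and $\rho > 0$ on $M \setminus B(x,r)$, seek $v \in C^{2,\alpha}(M)$ satisfying $0 \le v \le C u_0$ and
\[
-\gamma\Delta_f v + qv = u_0\,\rho \quad\text{on } M,
\]
and then set $w := -u_0 + \epsilon v$ with $\epsilon \in (0, 1/C)$. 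Such a $w$ automatically obeys $-2u_0 \le -u_0 \le w < 0$ (since $0 \le \epsilon v \le \epsilon C u_0 < u_0$) and
\[
-\gamma\Delta_f w + qw = \epsilon\bigl(-\gamma\Delta_f v + qv\bigr) = \epsilon\,u_0\,\rho > 0 \quad\text{on } M\setminus B(x,r),
\]
giving (1) and (2); the regularity $w \in C^{2,\alpha}(M)$ then follows from $u_0 \in C^{2,\alpha}$ and Schauder theory applied to $v$.

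The function $v$ is produced by an exhaustion. Let $B_j := B(p,j)$ for a fixed base point $p$, and consider the Dirichlet problem
\[
-\gamma\Delta_f v_j + qv_j = u_0\,\rho \quad\text{in } B_j, \qquad v_j = 0 \quad\text{on } \partial B_j.
\]
Since $u_0 > 0$ solves the homogeneous equation on $M$, Lemma \ref{equvlem} applied to each bounded $B_j$ gives $\lambda_{1,f}(B_j) > 0$, so the operator $-\gamma\Delta_f + q$ is invertible on $H^1_0(B_j)$ and $v_j$ exists uniquely. The refined maximum principle under $\lambda_{1,f}(B_j) > 0$ (in the Berestycki--Nirenberg--Varadhan sense) combined with $u_0\rho \ge 0$ forces $v_j \ge 0$. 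The crucial \emph{uniform} upper bound $v_j \le Cu_0$, with $C := \|\rho\|_{L^{\infty}(M)}$, comes from comparison with the supersolution $Cu_0$: the difference $Z_j := Cu_0 - v_j$ obeys $(-\gamma\Delta_f + q)Z_j = -u_0\rho \le 0$ in $B_j$ and $Z_j = Cu_0 \ge 0$ on $\partial B_j$, and the same refined maximum principle forces $Z_j \ge 0$. With this uniform estimate, local $C^{2,\alpha}$ Schauder estimates (using $f \in C^{\infty}$ and $q \in \mathrm{Lip}_{\mathrm{loc}}$) and a standard diagonal subsequence pass to a limit $v \in C^{2,\alpha}_{\mathrm{loc}}(M)$ with $0 \le v \le Cu_0$ solving the equation on all of $M$.

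The main technical hurdle is precisely the uniform bound $v_j \le Cu_0$ independent of $j$: without it the exhaustion limit may be degenerate and no bounded perturbation can be extracted. This bound rests jointly on the explicit positive supersolution $u_0$ and on the strict positivity of $\lambda_{1,f}(B_j)$ supplied by Lemma \ref{equvlem}; the boundedness of $f$ enters indirectly through well-posedness and regularity of the weighted Dirichlet problem. If one attempted to dispense with $u_0$ or with the spectral positivity, the barrier argument producing $Cu_0$ as an upper bound would collapse, and even existence of a sign-definite limit $v$ on the noncompact manifold would be unclear.
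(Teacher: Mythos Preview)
Your overall strategy of perturbing $-u_0$ by a solution of an inhomogeneous equation is natural, but the barrier argument for the uniform bound $v_j \le C u_0$ is incorrect, and without this bound the whole construction collapses.

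The error is in the application of the refined maximum principle. With $L := -\gamma\Delta_f + q$ and $\lambda_{1,f}(B_j)>0$, the Berestycki--Nirenberg--Varadhan principle asserts: if $Lz \le 0$ in $B_j$ and $z \le 0$ on $\partial B_j$, then $z \le 0$ in $B_j$ (equivalently, nonnegative supersolutions with nonnegative boundary data stay nonnegative). It does \emph{not} assert that a \emph{subsolution} with nonnegative boundary data is nonnegative. Your function $Z_j = Cu_0 - v_j$ satisfies $LZ_j = -u_0\rho \le 0$ (a subsolution inequality) together with $Z_j = Cu_0 \ge 0$ on $\partial B_j$; from this one cannot conclude $Z_j \ge 0$. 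Indeed, since $Lu_0 = 0$, no multiple $Cu_0$ is a supersolution for the inhomogeneous problem $Lv = u_0\rho$ with $\rho \ge 0$, $\rho \not\equiv 0$: one would need $L(Cu_0) \ge u_0\rho$, but the left side is identically zero.

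In fact the uniform bound you claim is \emph{false} in general. Take the toy model $M=\mathbb{R}$, $f\equiv 0$, $\gamma=1$, $q\equiv 0$, so $L=-\tfrac{d^2}{dx^2}$ and $u_0\equiv 1$. With $\rho\equiv 1$, the Dirichlet solution on $(-j,j)$ is $v_j(x)=\tfrac12(j^2-x^2)$, which satisfies $v_j(0)=j^2/2\to\infty$. So $v_j$ cannot be bounded by any fixed multiple of $u_0$, and the exhaustion limit does not produce a function $v$ with $0\le v \le Cu_0$. The heuristic reason is a Fredholm-type obstruction: formally $\int_M (Lv)\,u_0\,e^{-f} = \int_M v\,(Lu_0)\,e^{-f} = 0$, which is incompatible with $Lv = u_0\rho \ge 0$, $\not\equiv 0$, if the boundary terms vanish.

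The paper does not spell out its own argument here, referring instead to Lemma~2.2 of \cite{APX24} and Lemma~2.5 of \cite{Yeu}. Those proofs do not attempt to solve $Lv = u_0\rho$ globally; rather, they work with the first Dirichlet eigenfunctions $\phi_j>0$ of $L$ on an exhaustion $B_j$ and exploit the ground-state transform $\phi_j/u_0$ to obtain the uniform two-sided control $0<\phi_j\le u_0$ directly (after normalization), together with strict positivity of $\lambda_{1,f}(B_j)$ to get the strict differential inequality. If you want to salvage your approach, you would need a genuinely different mechanism to control $v_j$ from above---for instance, by choosing the right-hand side to be compactly supported \emph{inside} $B(x,r)$ rather than positive on all of $M\setminus B(x,r)$, and then combining the resulting $v$ with the eigenfunction construction; but as written, the step ``the same refined maximum principle forces $Z_j\ge 0$'' is the point where the proof breaks.
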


\begin{proof}[Proof of Lemma \ref{perturb}]
The proof follows by the argument of Lemma 2.2 of \cite{APX24} by
changing the Riemannian measure to the weighted measure. The
detailed discussion can be referred to Lemma 2.5 in \cite{Yeu}.
\end{proof}

Next, we have auxiliary density estimates for the unequally warped
$\mu$-bubble in weighted manifolds, which will be also used in the
proof of Theorem \ref{result2}. Notice that Yeung (see Lemma 2.6 in \cite{Yeu})
obtained the same conclusions of density estimates for a slightly
different functional.
\begin{lemma}\label{densest}
Let $(M,g, e^{-f}dv)$ be an $n$-dimensional complete noncompact weighted manifold
with $|f|\le\theta$ for some constant $\theta\ge0$. Let $\phi:M\to\R$ be a
surjective smooth proper $1$-Lipschitz function satisfying
$\Omega_0:=\phi^{-1}\left((-\infty,0)\right)$ has smooth bounded boundary.
For any $\e\in(0,\frac12)$, set
\[
h_\e:=c\,\bar h_\e\circ\phi,
\]
where $c>0$ is a constant and $\bar h_\e$ is given by Lemma \ref{bubfun}.
For $\gamma\ge 0$ and $0<u\in C^{2,\alpha}(M)$ for some $\alpha\in (0,1)$,
define the functional
\[
\mathcal P_{f,\e}(E):=\int_{\p^*E} u^\gamma e^{-f}
-\int_M\big(\chi_E-\chi_{\Omega_0}\big)h_\e u^\gamma e^{-(1+\frac{2}{n-1})f}
\]
for any \textit{admissible} set of locally finite perimeter $E\subset M$, i.e.,
such that $E\Delta\Omega_0\Subset\phi^{-1}((-\frac{1}{\e}, \frac{1}{\e}))$
up to negligible sets.

Assume $\Omega_\e$ is a minimizer for $\mathcal P_{f,\e}$ with respect to
admissible compactly supported variations. Then, up to modify $\Omega_\e$
on a negligible set, the following conclusions hold:
\begin{enumerate}
\item [(1)] $\partial^* \Omega_\e$ is an hypersurface of class $C^{2,\alpha'}$
for some $\alpha'\in(0,1)$, and $\partial\Omega_\e\setminus \partial^* \Omega_\e$
is a closed set with Hausdorff dimension no more than $n-8$.
\item [(2)]
Let $A\subset M$ be a precompact open set and $\e_0\in(0,\frac12)$ such that
$\overline{A}\subset \phi^{-1}((-\frac{1}{\e_0}, \frac{1}{\e_0}))$. Then
there exist $r_0$ and $C_d\in(0,1)$ depending on $n$, $\gamma$,
$\|\log u\|_{L^\infty(A)}$, $g|_{\overline{A}}$, $\e_0$, $\theta$ and $c$
such that
\[
C_d \le \frac{|\p\Omega_\e \cap B(y,\rho)|}{\rho^{n-1}} \le C_d^{-1}
\]
 for any $\e<\e_0$ and for any $y \in A \cap \p\Omega_\e$ with $B(y,2r_0)\Subset A$, and any $\rho<r_0$.
\end{enumerate}
\end{lemma}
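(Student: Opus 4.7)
The plan is to view $\Omega_\e$ as an almost-minimizer of the weighted perimeter $E \mapsto \int_{\p^* E} u^\gamma e^{-f}$ and to transplant the classical density and regularity theory for such objects to the weighted manifold setting. On any precompact open set $A$ as in (2), the hypotheses give uniform two-sided bounds on $u^\gamma e^{-f}$; moreover, Lemma \ref{bubfun}(3)--(4) together with the containment $\overline{A} \Subset \phi^{-1}((-1/\e_0, 1/\e_0))$ yields a uniform $L^\infty$ bound on $h_\e$ over $\overline A$ independent of $\e < \e_0$. Consequently the bulk integrand in $\mathcal{P}_{f,\e}$ is uniformly bounded on $\overline A$ by some $\Lambda$ depending only on the listed data, so $\Omega_\e$ is a $(\Lambda, r_0)$-almost minimizer of the Riemannian perimeter in $A$ (the smooth bounded weight $u^\gamma e^{-f}$ being harmlessly absorbed).

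For part (1), I would apply the standard almost-minimizer regularity theory (e.g.\ Tamanini's theorem, cf.\ Section 28 of \cite{Maggi12}) to obtain that $\p^* \Omega_\e$ is a $C^{1,1/2}$ hypersurface and that $\p \Omega_\e \setminus \p^* \Omega_\e$ has Hausdorff dimension at most $n - 8$. On the smooth part, the first variation of $\mathcal P_{f,\e}$ (computed exactly as in Section \ref{diacomp}, with $\alpha=\gamma$ and $k=\tfrac{2}{n-1}$) gives the Euler--Lagrange equation
\[
H_f + \gamma u^{-1} u_\nu = h_\e e^{-\frac{2}{n-1} f} \quad \text{along } \p \Omega_\e,
\]
which is a quasilinear elliptic PDE with $C^{\alpha}$ coefficients; Schauder bootstrapping then upgrades $C^{1,1/2}$ to $C^{2, \alpha'}$.

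For part (2), I would run the classical density argument for almost minimizers. Fix $y \in A \cap \p \Omega_\e$ with $B(y, 2r_0) \Subset A$ and $\rho < r_0$. Comparing $\Omega_\e$ with $\Omega_\e \setminus B(y, \rho)$ in the minimality inequality $\mathcal P_{f,\e}(\Omega_\e) \le \mathcal P_{f,\e}(\Omega_\e \setminus B(y,\rho))$ and using the uniform bounds above produces
\[
|\p^* \Omega_\e \cap B(y,\rho)| \le C_1 \bigl( |\p B(y,\rho) \cap \Omega_\e| + |B(y,\rho) \cap \Omega_\e| \bigr),
\]
and the analogous comparison with $\Omega_\e \cup B(y, \rho)$ gives the same estimate with $B(y,\rho) \cap \Omega_\e$ replaced by $B(y,\rho) \setminus \Omega_\e$. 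The upper density bound $|\p^* \Omega_\e \cap B(y,\rho)| \le C_d^{-1} \rho^{n-1}$ follows immediately. For the lower bound, set $V(\rho) := |B(y,\rho) \cap \Omega_\e|$, so that $V'(\rho) = \mathcal H^{n-1}(\p B(y,\rho) \cap \Omega_\e)$ for a.e.\ $\rho$ by the coarea formula. Combining the minimality inequality with the relative isoperimetric inequality on $B(y, r_0)$ (whose constant depends only on $g|_{\overline A}$) yields an ODE-type inequality of the form $V(\rho)^{(n-1)/n} \le C_4 (V'(\rho) + V(\rho))$, which integrates to $V(\rho) \ge c_1 \rho^n$; inserting this back into the isoperimetric inequality forces $|\p^* \Omega_\e \cap B(y,\rho)| \ge c_2 \rho^{n-1}$.

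The main obstacle is keeping every constant uniform in $\e < \e_0$. This is precisely where the precompactness $\overline{A} \Subset \phi^{-1}((-1/\e_0, 1/\e_0))$ is essential: on this region Lemma \ref{bubfun}(3)--(4) bounds $\bar h_\e$ independently of $\e$, so $h_\e$ contributes only a uniformly bounded zero-order perturbation. The remaining adaptation to the weighted Riemannian setting with the smooth bounded weight $u^\gamma e^{-f}$ is a routine reworking of the Euclidean template (e.g.\ Theorem 16.14 of \cite{Maggi12}), with all constants tracking through the dependencies $n$, $\gamma$, $\|\log u\|_{L^\infty(A)}$, $g|_{\overline A}$, $\e_0$, $\theta$ and $c$.
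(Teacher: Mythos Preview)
Your proposal is correct and follows essentially the same overall strategy as the paper: bound $h_\e$ uniformly on $\overline A$ (the paper phrases this as ``$h_\e\to 0$ on $\overline A$ as $\e\to 0$, hence $\|h_\e\|_{L^\infty(\overline A)}\le C$ for $\e<\e_0$''), absorb the weight $u^\gamma e^{-f}$ using the two-sided bounds, and recognize $\Omega_\e$ as an almost/quasi-minimizer of perimeter in $A$ with constants independent of $\e<\e_0$.

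The one noteworthy difference is in how part (2) is concluded. You carry out the density argument by hand: compare with the explicit competitors $\Omega_\e\setminus B(y,\rho)$ and $\Omega_\e\cup B(y,\rho)$, and for the lower bound run the standard ODE-type argument on $V(\rho)=|B(y,\rho)\cap\Omega_\e|$ via the coarea formula and the relative isoperimetric inequality. The paper instead, after deriving the comparison inequality for an arbitrary competitor $F$ with $F\Delta\Omega_\e\subset B(y,\rho)$ and invoking the local isoperimetric inequality, observes that $\Omega_\e$ is a $C$-quasiminimal set in the sense of Kinnunen--Korte--Lorent--Shanmugalingam \cite{Kin} and simply cites Lemma~5.1 there for both density bounds. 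Your hands-on approach is more self-contained and transparent about the constants; the paper's approach is shorter and offloads the routine verification to the metric-space literature. Both yield exactly the claimed dependencies.
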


\begin{proof}[Proof of Lemma \ref{densest}]
The conclusion (1) exactly follows by the Riemannian case; see Theorems 27.5 and 28.1
in \cite{Maggi12}. The conclusion (2) follows by the argument of \cite{APX24,Yeu}.
Since the functional $\mathcal P_{f,\e}(E)$ in Lemma \ref{densest} is
different from the case of \cite{Yeu}, we retain our complete proof of conclusion (2)
here.

In the following, we let $C>0$ be a constant depending on $n$, $\gamma$, $q$,
$||\log u||_{L^\infty(A)}$, $g|_{\overline{A}}$, $\e_0$, $c$ and $\theta$, which may
change from line to line. Let $y\in A$ and $\rho>0$ such that $B(y,\rho)\subset A$.
Since $h_\e\to 0$ on $\overline{A}$ as $\e\to 0$, then
$||h_\e||_{L^\infty(\overline{A})}\le C$ for any $\e<\e_0$. So, for any arbitrary
set $F$ of locally finite perimeter such that $F\Delta \O_\e\subset B(y,\rho)$,
since $|f|\le\theta$ for some constant $\theta\ge 0$, we have
\begin{equation}
\begin{split}\label{estupp}
\int_{\p^*\O_\e}u^\gamma e^{-f}&\le \int_{\p^* F}u^\gamma e^{-f}
+\int h_\e u^{\gamma}(\chi_{\O_\e}-\chi_F) e^{-(1+\frac{2}{n-1})f}\\
&\le\int_{\p^* F}u^\gamma e^{-f}+C\,||h_\e||_{L^\infty(A)}\cdot||u||^\gamma_{L^\infty(A)}
\cdot|F\Delta\O_\e|\\
 &\le\int_{\p^* F}u^\gamma e^{-f}+C|F\Delta \O_\e|.
\end{split}
\end{equation}
We also notice that
\begin{equation}
\begin{split}\label{estul}
\int_{\p^*\O_\e\setminus \p^*F}u^\gamma e^{-f}
&\ge\int_{(\p^*\O_\e\setminus \p^*F)\cap B(y,\rho)}u^\gamma e^{-f}\\
&\ge\int_{\p^*\O_\e\cap B(y,\rho)}u^\gamma e^{-f}-\int_{\p^*F\cap B(y,\rho)}u^\gamma e^{-f}.
\end{split}
\end{equation}
Combining \eqref{estupp} and \eqref{estul}, by the upper and lower bound
of $u$ in $\overline{A}$, we get
\[
\begin{split}
C|\p^*\O_\e\cap B(y,\rho)|&\le\int_{\p^*\O_\e\cap B(y,\rho)}u^\gamma e^{-f}\\
&\le \int_{\p^*F\cap B(y,\rho)}u^\gamma e^{-f}+\int_{\p^*\O_\e\setminus \p^*F}u^\gamma e^{-f}\\
&\le \int_{\p^*F\cap B(y,\rho)}u^\gamma e^{-f}+C|F\Delta \O_\e|\\
&\le C\left(|\p^*F\cap B(y,\rho)|+|F\Delta \O_\e|\right).
\end{split}
\]
Since the local isoperimetric inequality with Euclidean exponents for sets
contained in $A$ and the Ahlofs-type bound contained in $A$ are still hold
in weighted measure, using $|f|\le \theta$ for some constant $\theta\ge 0$,
then we have
\begin{equation}
\begin{split}\label{isopest}
|\p^*\Omega_\e \cap B(y,\rho)|
&\le C\left[|\p^*F\cap B(y,\rho)|
+|F\Delta \Omega_\e|^{\frac1n}|F\Delta\Omega_\e|^{1-\frac 1n}\right]\\
&\le C\left[|\p^*F\cap B(y,\rho)|+|B(y,\rho)|^{\frac1n}\cdot C_{\rm iso}|\p^*(F\Delta\Omega_\e)|\right]\\
&\le C\left[|\p^*F\cap B(y,\rho)|+C\rho|\p^*(F\Delta\Omega_\e)|\right]\\
&\le C\left[|\p^*F\cap B(y,\rho)|+C\rho\Big(|\p^*\Omega_\e\cap B(y,\rho)|+|\p^*F\cap B(y,\rho)|\Big)\right].
\end{split}
\end{equation}
If $\rho$ is small enough, then
\[
|\p^*\Omega_\e \cap B(y,\rho)|\le C |\p^*F\cap B(y,\rho)|.
\]
So, $\Omega_\e$ is a $C$-quasiminimal set in $B(y,2r_0)\Subset A$ in the
sense of Definition 3.1 in \cite{Kin}. The desired density estimates follow
from Lemma 5.1 in \cite{Kin} and the conclusion (1).
\end{proof}

Using the above lemmas, we can follow the argument of \cite{APX24} to prove Theorem
\ref{result2}. Since the proof is very similar to the manifold case in \cite{APX24},
here we only provide some brief proof steps. Some technique details we omitted
are referred to \cite{APX24}.

\begin{proof}[Proof of Theorem \ref{result2}]
If $\gamma\le 0$, this case returns to the setting: $\Ric_f\ge0$ point-wise
and $f$ is bounded in $(M,g,e^{-f}dv)$. So the conclusion follows by the
Cheeger-Gromoll type splitting theorem for the $\infty$-Bakry-Emery Ricci
curvature due to Lichnerowicz \cite{Lic} (see also Theorem 6.1 in \cite{WW09}).
In the rest, we may assume $\gamma>0$. By the preceding statement, since $\lambda_1(-\gamma\Delta_f+\Ric_f)\ge0$, then there exist $\alpha\in(0,1)$ and
$0<u_0\in C^{2,\alpha}(M)$ such that $-\gamma\Delta_f u_0+\Ric_f\cdot u_0=0$.
In the following, we shall \emph{claim} that
\[
|\D u_0|(x)=0
\]
for any point $x\in M$. If the claim is true, then function $u_0$ is constant on
$M$ and hence $\Ric_f\ge0$ point-wise on $M$. Combining this with the bounded
function $f$, the theorem follows by Lichnerowicz's splitting result \cite{Lic}.

Below we shall prove the claim $|\D u_0|(x)=0$ for any point $x\in M$. Since
$M$ has more than one end, we can follow the construction of \cite{APX24} to
obtain a smooth surjective proper function $\phi:M\to\R$ such that $|\D\phi|\le 1$,
$|\phi(x)|<\frac12$ and $\Omega_0:=\phi^{-1}((-\infty,0))$ has smooth boundary.
Let $0<\delta,r\ll1$ satisfy
\begin{equation}\label{eq:radii_constraints}
B(x,1/\delta)\Supset\phi^{-1}([-2,2]),\quad
B(x,2r)\Subset\phi^{-1}\big([-1,1]\big).
\end{equation}
We use Lemma \ref{perturb} to ball $B(x,r)$ with $u_0$
and $q=\Ric_f$. We let $w_r$ denote the resulting function.
For a parameter $a>0$, we set $u_{r,a}:=u_0+aw_r$. There exists
$a_0=a_0(\delta,w_r,u_0)$ so that for all $a<a_0$,
\begin{equation}\label{eq:choice_a_1}
u_0/2\le u_{r,a}\le u_0,\quad ||u_{r,a}-u_0||_{C^2(B(x,1/\delta))}<\delta,
\end{equation}
and
\begin{equation}\label{eq:choice_a_2}
\left\{\begin{aligned}
&-\gamma\Delta_f u_{r,a}+u_{r,a}\,\Ric_f>0\quad\text{in $M\setminus B(x,r)$,} \\
&-\gamma\Delta_f u_{r,a}+u_{r,a}\,\Ric_f\ge-\delta \cdot u_{r,a}
\quad\text{in $B(x,r)$.}
\end{aligned}\right.
\end{equation}
Particularly, by \eqref{eq:radii_constraints} and \eqref{eq:choice_a_2},
there exists a constant $\mu=\mu(a,w_r,u_0)>0$ such that
\begin{equation}\label{eq:choice_a_3}
-\gamma\Delta_f u_{r,a}+u_{r,a}\,\Ric_f\ge\mu\cdot u_{r,a}
\end{equation}
in $\phi^{-1}([-2,2])\setminus B(x,2r)$.

Since $|f|\le\theta$ for some constant $\theta\ge 0$ on $M$,
for a sufficiently small constant $c_0=c_0(\theta, n,\gamma)>0$,
which will be determined below in \eqref{defc_0}, and for any
$\e<\frac{1}{10}$, function $h_\e$ is defined by
\[
h_\e(x):=c_0^{-1}\bar h_\e(\phi(x))
\]
for all $x\in\phi^{-1}((-\tfrac{1}{\e},\tfrac{1}{\e}))$,
where $\bar h_\e$ is given by Lemma \ref{bubfun}. In particular,
we have
\begin{equation}\label{eq:dh+h2}
c_0h_\e^2-|\nabla h_\e|
= c_0^{-1}\left[\bar h_\e(\phi)^2-|\bar h'_\e(\phi)|\cdot|\nabla\phi|\right]
\ge c_0^{-1}\e^2
\end{equation}
on $\phi^{-1}\big((-\frac{1}{\e},-1]\cup[1,\frac{1}{\e})\big)$,
where we used that $\phi$ is $1$-Lipschitz and Lemma \ref{bubfun}(1)(3).
By Lemma \ref{bubfun}(2)(3),  we have
\begin{equation}\label{eq:dh+h2f}
c_0h_\e^2-|\D h_\e|\ge-Cc_0^{-1}\e
\end{equation}
inside $\phi^{-1}\big([-1,1]\big)$, where $C$ is the constant in Lemma \ref{bubfun}(2). For $\delta$, $a$ and
$r$ small enough as indicated above, we can perform the $\mu$-bubble argument
as in \cite{APX24}. Below we let $u=u_{r,a}$ for notational simplicity.
For any set of finite perimeter $\Omega$ with
$\Omega\Delta\Omega_0\subset\phi^{-1}((-\frac{1}{\e},\frac{1}{\e}))$,
we define the following energy functional
\[
\mathcal{P}_{f,\e}(\Omega):=\int_{\p^*\Omega}u^\gamma e^{-f}
-\int_M(\chi_\Omega-\chi_{\Omega_0}) h_{\e} u^\gamma e^{-(1+\frac{2}{n-1})f}.
\]
As discussion in Proposition 12 in \cite{CL24}, since $h_\e$ diverges as
$\phi\to\pm\frac{1}{\e}$ by Lemma \ref{bubfun}(3), there exists
a minimizer $\Omega_\e$ for $\mathcal{P}_{f,\e}$ with
$\Omega_\e\Delta\Omega_0\Subset\phi^{-1}((-\frac{1}{\e},\frac{1}{\e}))$.

Since $u\in C^{2,\alpha}(M)$, we can apply the regularity property in Lemma \ref{densest}(1)
to $\Omega_\e$. For any $\vp\in C^{\infty}(M)$, let $\{F_t\}_{t\in(-t_0,t_0)}$
be a smooth family of sets with $F_0=\Omega_\e$ and $\frac{\p F_t}{\de t}=\vp\nu$
at $t=0$, where $\nu$ is the outer unit normal of $\p^*\Omega_\e$.
From the argument of \cite{APX24}, here we only need to consider the case $3\le n\le 7$.
Because when $n\ge 8$, the boundary $\p^*\Omega_\e$ may contain singularities.
But we can apply the argument of Appendix A in \cite{AX17} to carry
out our the following computations.

The first variation of $\mathcal{P}_{f,\e}(F_t)$ is that
\[
0 = \frac{\mathrm{d}\mathcal{P}_{f,\e}(F_t)}{\mathrm{d}t}\Bigg|_{t=0}
= \int_{\p^*\Omega_{\e}}\left(H_f+\gamma u^{-1}u_\nu-h_{\e}e^{-\frac{2f}{n-1}}\right)u^\gamma\vp e^{-f}.
\]
Since $\vp$ is arbitrary, we have
\[
H_f=h_\e e^{-\frac{2f}{n-1}}-\gamma u^{-1}u_\nu.
\]
Then we compute its second variation
\[
\begin{split}
0\le&\frac{\mathrm{d}^2 \mathcal{P}_{f,\e}(F_t)}{\mathrm{d}t^2}\Bigg|_{t=0}\\
= &\int_{\p^*\Omega_{\e}}
\Big[-\Delta_{\p^*\Omega_{\e}}\vp-|\mathrm{II}|^2\vp-\Ric_f(\nu,\nu)\vp-\gamma u^{-2}u_\nu^2\vp\\
&\qquad\quad+\gamma u^{-1}\vp\left(\Delta_fu-\Delta_{\p^*\Omega_{\e}}u
+\langle\D u,\nabla f\rangle-Hu_\nu\right)
-\gamma u^{-1}\langle\D_{\p^*\Omega_{\e}}u,\D_{\p^*\Omega_{\e}}\vp\rangle\\
&\qquad\quad+\langle\D_{\p^*\Omega_{\e}}f,\D_{\p^*\Omega_{\e}}\vp\rangle
-\langle\D h_{\e},\nu\rangle e^{-\frac{2f}{n-1}}\vp
+\frac{2}{n-1}h_{\e}e^{-\frac{2f}{n-1}}f_{\nu}\vp\Big] u^\gamma\vp e^{-f}.
\end{split}
\]
Since $\p\Omega_{\e}$ is compact, taking $\vp:= u^{-\gamma/2}$, then
\begin{equation}
\begin{split}\label{PQResti}
0\le&\int_{\p^*\Omega_{\e}}\Big[-u^{\frac{\gamma}{2}}\Delta_{\p^*\Omega_{\e}}(u^{-\frac{\gamma}{2}})
-\gamma u^{-1}\Delta_{\p^*\Omega_{\e}}u-\gamma u^{\frac{\gamma}{2}-1}
\langle\D_{\p^*\Omega_{\e}}u,\D_{\p^*\Omega_{\e}}(u^{-\frac{\gamma}{2}})\rangle\Big]e^{-f}\\
&+\int_{\p^*\Omega_{\e}}\Big[-|\mathrm{II}|^2-\gamma u^{-2}u_\nu^2-\gamma H u^{-1}u_\nu
+\gamma u^{-1}\langle\D u,\nabla f\rangle-\langle\D h_{\e},\nu\rangle e^{-\frac{2f}{n-1}}\Big]e^{-f}\\
&+\int_{\p^*\Omega_{\e}}\Big[\gamma u^{-1}\Delta_fu-\Ric_{f}(\nu,\nu)
+u^{\frac{\gamma}{2}}\langle\D_{\p^*\Omega_{\e}} f,\D_{\p^*\Omega_{\e}}(u^{-\frac{\gamma}{2}})\rangle
+\frac{2 h_{\e}}{n-1}e^{-\frac{2f}{n-1}}f_{\nu}\Big]e^{-f}\\
=&\int_{\p^*\Omega_{\e}}\Big[-u^{\frac{\gamma}{2}}{\Delta}_{\p^*\Omega_{\e}}(u^{-\frac{\gamma}{2}})
-\gamma u^{-1}{\Delta}_{\p^*\Omega_{\e}}u
-\gamma u^{\frac{\gamma}{2}-1}\langle\D_{\p^*\Omega_{\e}}u,\D_{\p^*\Omega_{\e}}(u^{-\frac{\gamma}{2}})\rangle\\
&\qquad\quad+\frac{\gamma}{2}u^{-1}\langle\D_{\p^*\Omega_{\e}} f,\D_{\p^*\Omega_{\e}}u\rangle\Big]e^{-f}\\
&+\int_{\p^*\Omega_{\e}}\Big[-|\mathrm{II}|^2-\gamma u^{-2}u_\nu^2-\gamma H_f u^{-1}u_\nu
-\langle\D h_{\e},\nu\rangle e^{-\frac{2f}{n-1}}+\frac{2 h_{\e}}{n-1}e^{-\frac{2f}{n-1}}f_{\nu}\Big]e^{-f}\\
&+\int_{\p^*\Omega_{\e}}\Big[\gamma u^{-1}\Delta_fu-\Ric_{f}(\nu,\nu)\Big]e^{-f}\\
=:&\int_{\p^*\Omega_\e} (P+Q+R)e^{-f},
\end{split}
\end{equation}
where in the second equality, we used the following relation
\[
\gamma u^{-1}\langle\D u,\nabla f\rangle
+u^{\frac{\gamma}{2}}\langle\D_{\p^*\Omega_{\e}}f,\D_{\p^*\Omega_{\e}}(u^{-\frac{\gamma}{2}})\rangle
=\gamma u^{-1}u_{\nu}f_{\nu}
+\frac{\gamma}{2}u^{-1}\langle\D_{\p^*\Omega_{\e}} f,\D_{\p^*\Omega_{\e}}u\rangle.
\]
In the above inequality, three integral terms on the right hand side can be
further estimated by using the same arguments of \cite{APX24}. The detailed
discussions are as follows.

For the first integral term $\int_{\p^*\Omega_\e}Pe^{-f}$, integrating by parts
with respect to the weighted area measure, we have
\begin{equation}
\begin{split}\label{eq:P}
\int_{\p^*\Omega_\e}Pe^{-f}
&=\int_{\p^*\Omega_\e}
\frac{\gamma^2}{4}u^{-2}|\D_{\p^*\Omega_\e}u|^2 e^{-f}-\gamma u^{-2}|\D_{\p^*\Omega_\e}u|^2 e^{-f}\\
&=\int_{\p^*\Omega_\e}
\left(\frac{\gamma^2}{4}-\gamma\right)u^{-2}|\D_{\p^*\Omega_\e}u|^2 e^{-f}.
\end{split}
\end{equation}
Notice that $\frac{\gamma^2}{4}-\gamma<0$ for $0<\gamma<4$.

Next, we estimate the second integral term $\int_{\p^*\Omega_\e}Qe^{-f}$.
Set
\[
X=h_\e e^{-\frac{2f}{n-1}}\quad \mathrm{and} \quad
Y=u^{-1}u_\nu.
\]
Then
\[
H_f=X-\gamma Y.
\]
Combining this and the trace inequality $|\mathrm{II}|^2\ge\frac{H^2}{n-1}$,
we see that
\begin{equation}
\begin{split}\label{etiQ}
Q&=-|\mathrm{II}|^2-\gamma u^{-2}u_\nu^2-\gamma H_f u^{-1}u_\nu
-\langle\D h_{\e},\nu\rangle e^{-\frac{2f}{n-1}}+\frac{2 h_{\e}}{n-1}e^{-\frac{2f}{n-1}}f_{\nu}\\
&\le-\frac{(X-\gamma Y+f_\nu)^2}{n-1}-\gamma Y^2-\gamma Y(X-\gamma Y)
+|\D h_{\e}|e^{-\frac{2f}{n-1}}+\frac{2}{n-1}Xf_{\nu}\\
&=-\frac{X^2}{n-1}+\left(\frac{2\gamma}{n-1}-\gamma\right)XY
+\left(\frac{n-2}{n-1}\gamma^2-\gamma\right)Y^2\\
&\quad-\frac{1}{n-1}f^2_\nu+\frac{2\gamma}{n-1}Yf_\nu+|\D h_{\e}|e^{-\frac{2f}{n-1}}.
\end{split}
\end{equation}

To go on estimating the above $Q$, we need the following two claims.

\textbf{Claim 1:} For any $0<\gamma<\frac{4}{n-1}$, there exist a positive
constant $\e_0=\e_0(n,\gamma)\le\frac{1}{2(n-1)}$ and a positive constant
$\beta_0=\beta_0(n,\gamma):=\gamma-\frac{n-1}{4}\gamma^2-(n-3)^2\gamma^2\e_0$,
such that
\[
-\frac{X^2}{n-1}+\left(\frac{2\gamma}{n-1}-\gamma\right)XY
+\left(\frac{n-2}{n-1}\gamma^2-\gamma\right)Y^2\le -\e_0X^2-\beta_0Y^2.
\]
\begin{remark}\label{este_01}
For example, we can take
\[
\e_0(n,\gamma)=\min\left\{\frac{1}{2(n-1)},\,\,
\frac{1}{(n-3)^2+1}\left(\frac{1}{\gamma}-\frac{n-1}{4}\right)\right\}
\]
to ensure $\beta_0>0$.
\end{remark}

\begin{proof}[Proof of Claim 1]
We first check $\beta_0>0$. If we choose $\e_0=\e_0(n,\gamma)$ as in Remark \ref{este_01},
then
\[
0<\gamma\le\frac{1}{\frac{n-1}{4}+\left[(n-3)^2+1\right]\e_0}.
\]
Using the above estimate, according to the definition of $\beta_0$,
we directly check that $\beta_0>0$. In the rest, we only need to check
that the following $(X,Y)$-polynomial
\[
p(X,Y):=\left(\e_0-\frac{1}{n-1}\right)X^2+\left(\frac{2\gamma}{n-1}-\gamma\right)XY
+\left(\beta_0+\frac{n-2}{n-1}\gamma^2-\gamma\right)Y^2
\]
is non-positive. Noticing that $\e_0-\frac{1}{n-1}<0$ and its discriminant
\[
\begin{split}
\Delta&=\left(\frac{2\gamma}{n-1}-\gamma\right)^2-4\left(\e_0-\frac{1}{n-1}\right)
\left(\beta_0+\frac{n-2}{n-1}\gamma^2-\gamma\right)\\
&=\frac{2(n-3)^2}{n-1}\gamma^2\e_0\Big(2(n-1)\e_0-1\Big)\\
&\le0,
\end{split}
\]
where we used $\e_0\le\frac{1}{2(n-1)}$ in the last inequality, hence
$p(X,Y)\le0$.
\end{proof}

Next we consider the following $(f_\nu,Y)$-polynomial
\[
p(f_\nu,Y):=-\frac{1}{n-1}f^2_\nu+\frac{2\gamma}{n-1}Yf_\nu-\beta_0 Y^2,
\]
where $0<\gamma<\frac{4}{n-1}$. Then we have

\textbf{Claim 2:}
For any $0<\gamma<\left(\frac{1}{n-1}+\frac{n-1}{4}\right)^{-1}$, there
exist a positive constant $\e_0=\e_0(n,\gamma)\le\frac{1}{2(n-1)}$ and
another positive constant $\beta_1=\beta_1(n,\gamma):=\beta_0-\frac{\gamma^2}{n-1}$,
where $\beta_0=\gamma-\frac{n-1}{4}\gamma^2-(n-3)^2\gamma^2\e_0$, such that
\[
p(f_\nu,Y)\le-\beta_1Y^2.
\]

\begin{remark}\label{este_02}
For example, we can take
\[
\e_0(n,\gamma)=\min\left\{\frac{1}{2(n-1)},\,\,
\frac{1}{(n-3)^2+1}\left(\frac{1}{\gamma}-\frac{1}{n-1}-\frac{n-1}{4}\right)\right\}
\]
to ensure $\beta_1>0$.
\end{remark}

\begin{proof}[Proof of Claim 2]
We see that
\[
\begin{split}
p(f_\nu,Y)&=-\frac{1}{n-1}\left(f_\nu-\gamma Y\right)^2
-\left(\beta_0-\frac{\gamma^2}{n-1}\right)Y^2\\
&\le-\left(\beta_0-\frac{\gamma^2}{n-1}\right)Y^2\\
&:=-\beta_1Y^2.
\end{split}
\]
Now we choose $\e_0=\e_0(n,\gamma)$ as in Remark \ref{este_02} and then
\[
0<\gamma\le\frac{1}{\frac{1}{n-1}+\frac{n-1}{4}+\left[(n-3)^2+1\right]\e_0}.
\]
Using the above estimate and the definition of $\beta_1$,
we directly check that
\[
\beta_1=\beta_0-\frac{\gamma^2}{n-1}>0
\]
and Claim 2 follows.
\end{proof}

Combining Claims 1 and 2 with \eqref{etiQ}, we indeed show that for any
$0<\gamma<\left(\frac{1}{n-1}+\frac{n-1}{4}\right)^{-1}$, there exist a positive
constant $\e_0=\e_0(n,\gamma)$ ( for example, $\e_0=\e_0(n,\gamma)$ can be chosen in Remark
\ref{este_02}), a positive constant $\beta_0=\gamma-\frac{n-1}{4}\gamma^2-(n-3)^2\gamma^2\e_0$
and a positive constant $\beta_1=\beta_0-\frac{\gamma^2}{n-1}$, such that
\eqref{etiQ} can be estimated by
\begin{equation*}
\begin{split}
Q&\le -\e_0X^2-\beta_0Y^2-\frac{1}{n-1}f^2_\nu+\frac{2\gamma}{n-1}Yf_\nu+|\D h_{\e}|e^{-\frac{2f}{n-1}}\\
&\le-\e_0 X^2-\beta_1Y^2+|\D h_\e|e^{-\frac{2f}{n-1}}\\
&=-\e_0h_\e^2e^{-\frac{4f}{n-1}}-\beta_1u^{-2}u_\nu^2+|\D h_\e|e^{-\frac{2f}{n-1}},
\end{split}
\end{equation*}
where we used Claim 1 in the first inequality and Claim 2 in the second inequality.
Since $|f|\le \theta$ for some constant $\theta\ge0$, then we further have
\begin{equation}
\begin{split}\label{eqn:Defnc0}
Q&\le-\e_0h_\e^2e^{\frac{-4\theta}{n-1}}-\beta_1u^{-2}u_\nu^2+|\D h_\e|e^{\frac{2\theta}{n-1}}\\
&\le e^{\frac{2\theta}{n-1}}\Big[-c_0h_\e^2-c_0u^{-2}u_\nu^2+|\D h_\e|\Big]
\end{split}
\end{equation}
for some sufficiently small constant $c_0=c_0(\theta, n,\gamma)>0$. For example, we can take
\begin{equation}\label{defc_0}
c_0=\min\left\{e^{\frac{-6\theta}{n-1}}\e_0,\,\, e^{\frac{-2\theta}{n-1}}\beta_1,\,\,
e^{\frac{-2\theta}{n-1}}\left(\gamma-\frac{\gamma^2}{4}\right)\right\},
\end{equation}
where $\e_0=\e_0(n,\gamma)$ is given in Remark \ref{este_02} and
$\beta_1=\beta_0-\frac{\gamma^2}{n-1}$, and where
$\beta_0=\gamma-\frac{n-1}{4}\gamma^2-(n-3)^2\gamma^2\e_0$.
Clearly, the above choice $c_0$ satisfies
\[
c_0\le\min\left\{e^{\frac{-6\theta}{n-1}}\e_0,\,\, e^{\frac{-2\theta}{n-1}}\beta_1\right\}
\quad \mathrm{and} \quad
e^{\frac{2\theta}{n-1}}c_0\le\gamma-\frac{\gamma^2}{4}.
\]
Here we would like to point out that this $c_0=c_0(\theta, n,\gamma)$
(for example, \eqref{defc_0}) is the constant we are using in \eqref{eq:dh+h2}.

By \eqref{eqn:Defnc0}, we therefore have the estimate of $\int_{\p^*\Omega_\e}Qe^{-f}$:
\begin{equation}\label{eqn:Def2}
\int_{\p^*\Omega_\e}Q e^{-f}\le\int_{\p^*\Omega_\e}
e^{\frac{2\theta}{n-1}}\Big[-c_0h_\e^2-c_0u^{-2}u_\nu^2+|\D h_\e|\Big]e^{-f}.
\end{equation}

For the third integral term $\int_{\p^*\Omega_\e}Re^{-f}$, by \eqref{eq:choice_a_2}
and \eqref{eq:choice_a_3}, we estimate that
\begin{equation}
\begin{aligned}\label{eq:R}
\int_{\p^*\Omega_\e}R e^{-f}
&=\int_{\p^*\Omega_{\e}}\Big[\gamma u^{-1}\Delta_fu-\Ric_{f}(\nu,\nu)\Big]e^{-f}\\
&\le\int_{\p^*\Omega_\e}
\Big[\delta\cdot\chi_{B(x,r)}-\mu\cdot\chi_{\phi^{-1}([-2,2])\setminus B(x,2r)}\Big]e^{-f}.
\end{aligned}
\end{equation}

Substituting \eqref{eq:P}, \eqref{eqn:Def2} and \eqref{eq:R} into \eqref{PQResti},
we have that
\begin{equation}\label{eq:2ndvar_final}
\begin{aligned}
0&\le\int_{\p^*\Omega_\e} (P+Q+R)e^{-f} \\
&\le\int_{\p^*\Omega_\e}
\left(\tfrac{\gamma^2}{4}-\gamma\right)u^{-2}|\D_{\p^*\Omega_\e}u|^2 e^{-f}
+\int_{\p^*\Omega_\e}e^{\frac{2\theta}{n-1}}\Big[-c_0h_\e^2-c_0u^{-2}u_\nu^2+|\D h_\e|\Big]e^{-f}\\
&\quad+\int_{\p^*\Omega_\e}\Big[\delta\cdot\chi_{B(x,r)}-\mu\cdot\chi_{\phi^{-1}([-2,2])\setminus B(x,2r)}\Big]e^{-f}\\
&\le\int_{\p^*\Omega_\e}e^{\frac{2\theta}{n-1}}\Big[\!-c_0u^{-2}|\D_{\p^*\Omega_\e}u|^2-c_0u^{-2}u_\nu^2\Big]e^{-f}
+e^{\frac{2\theta}{n-1}}\Big[\!-c_0h_\e^2+|\D h_\e|\Big]e^{-f}\\
&\quad+\int_{\p^*\Omega_\e}\Big[\delta\cdot\chi_{B(x,r)}-\mu\cdot\chi_{\phi^{-1}([-2,2])\setminus B(x,2r)}\Big]e^{-f}\\
&\le\int_{\p^*\Omega_\e}e^{\frac{2\theta}{n-1}}e^{-\theta}
\Big[-c_0u^{-2}|\D_{\p^*\Omega_\e}u|^2-c_0u^{-2}u_\nu^2\Big]\\
&\quad-e^{\frac{2\theta}{n-1}}e^{-\theta} c_0^{-1}\e^2\big|\p^*\Omega_\e\setminus\phi^{-1}([-1,1])\big|
+e^{\frac{2\theta}{n-1}}e^{\theta}Cc_0^{-1}\e\big|\p^*\Omega_\e\cap\phi^{-1}([-1,1])\big|\\
&\quad+\delta e^{\theta}\big|\p^*\Omega_\e\cap B(x,r)\big|
-\mu e^{-\theta}\big|\p^*\Omega_\e\cap\big(\phi^{-1}([-2,2])\setminus B(x,2r)\big)\big|
\end{aligned}
\end{equation}
for some sufficiently small constant $c_0=c_0(\theta,n,\gamma)>0$, where we
used $\frac{\gamma^2}{4}-\gamma\le-e^{\frac{2\theta}{n-1}}c_0$ from \eqref{defc_0}
in the above third inequality, and where we used \eqref{eq:dh+h2} and \eqref{eq:dh+h2f}
and $|f|\le\theta$ in the last inequality. Here constants $\delta$ and $\mu$
are given in \eqref{eq:choice_a_2} and \eqref{eq:choice_a_3}.

Similar to the argument of \cite{APX24}, we see that
$\p^*\Omega_\e\cap\phi^{-1}([-1,1])\ne\emptyset$.
Indeed, otherwise, by \eqref{eq:2ndvar_final} and $B(x,r)\Subset\phi^{-1}\big([-1,1]\big)$,
we have
\[
0\le-e^{\frac{2\theta}{n-1}}e^{-\theta}c_0^{-1}\e^2\big|\p^*\Omega_\e\big|<0.
\]
We also see that $\p^*\Omega_\e\cap B(x,2r)\ne\emptyset$
whenever $\e<e^{-\frac{2n\theta}{n-1}}C^{-1}c_0\mu$.
Indeed, since $\p^*\Omega_\e\cap\phi^{-1}([-1,1])\ne\emptyset$, then
$|\p^*\Omega_\e\cap\phi^{-1}([-2,2])|>0$. Therefore, if
$\p^*\Omega_\e\cap B(x,2r)=\emptyset$, then \eqref{eq:2ndvar_final} implies
\[
0\le e^{\frac{2\theta}{n-1}}e^{\theta}Cc_0^{-1}\e\big|\p^*\Omega_\e\cap\phi^{-1}([-1,1])\big|
-\mu e^{-\theta}\big|\p^*\Omega_\e\cap\big(\phi^{-1}([-2,2])\big)\big|<0.
\]

Now, for each $m\ge m_0$, where $m_0\in \mathbb N$, we define $\delta_m=r_m=\frac{1}{m}$.
If $m_0$ is chosen large enough, then for every $\delta_m$ and $r_m$, we have
sufficiently small $a_m,\mu_m>0$ such that \eqref{eq:radii_constraints}, \eqref{eq:choice_a_1},
\eqref{eq:choice_a_2} and \eqref{eq:choice_a_3} are met. The perturbed functions
are denoted by $u_m=u_0+a_mw_{r_m}$. We choose
\[
\e_m<\min\left\{\tfrac{1}{m},\,\,e^{-\frac{2n\theta}{n-1}}C^{-1}c_0\mu_m\right\},
\]
and then from what we discussed above, we see that there exists
$y_m\in\p^*\Omega_{\e_m}\cap B(x,2r_m)$.

Notice that $y_m \to x$ as $m\to\infty$. Moreover, by \eqref{eq:choice_a_1},
$\log u_m$ is locally bounded uniformly in $L^\infty$ with respect to $m$,
so the density estimates provided by Lemma \ref{densest}(2) are locally
uniform around $x$ as $m\to \infty$. By contradiction, if we assume
$|\D u_0|(x)>0$, from Lemma \ref{densest}(2) and \eqref{eq:choice_a_1}, we
see that there exist two positive constants $\rho$ and $\eta$ such that
\begin{equation}\label{eqn:DensityLowerBound}
\int_{\p^*\Omega_{\e_m}\cap B(x,\rho)}u_m^{-2}|\D u_m|^2 e^{-f}
\ge\int_{\p^*\Omega_{\e_m}\cap B(y_m,\rho/2)} u_m^{-2}|\D u_m|^2e^{-f}
\ge e^{-\theta}\eta>0
\end{equation}
for any large $m$.

On the other hand, we have a fact that $|\p^*\Omega_{\e_m} \cap \phi^{-1}([-1,1])|$
has an upper bound independent of $m$. Indeed, this fact is obtained by comparing
$\mathcal{P}_{f,\e_m}(\Omega_{\e_m})\le\mathcal{P}_{f,\e_m}(\Omega_0)$, which says
that
\[
\int_{\p^*\Omega_{\e_m}}u_m^\gamma e^{-f}\le\int_{\p\Omega_0}u_m^\gamma e^{-f}
+\int_M\big(\chi_{\Omega_{\e_m}}-\chi_{\Omega_0}\big)h_{\e_m}u_m^\gamma e^{-(1+\frac{2}{n-1})f}
\le\int_{\p\Omega_0}u_m^\gamma e^{-f},
\]
where we used the property of $\overline{h}_{\varepsilon_m}$ in Lemma \ref{bubfun}(3)
and the fact that $u_m$ are uniformly bounded from above and below in $\phi^{-1}([-1,1])$
due to \eqref{eq:choice_a_1}.

Combining \eqref{eqn:DensityLowerBound} and \eqref{eq:2ndvar_final}, and
using the above fact just now, we finally get that
\begin{equation*}
\begin{aligned}
0&<e^{\frac{2\theta}{n-1}}e^{-2\theta}c_0\eta\\
&\le\liminf_{m\to\infty}\int_{\p^*\Omega_{\e_m}\cap B(x,\rho)}
e^{\frac{2\theta}{n-1}}e^{-\theta}c_0u_m^{-2}|\D u_m|^2e^{-f}\\
&\le\liminf_{m\to\infty}\, e^{\frac{2\theta}{n-1}} e^{\theta} Cc_0^{-1}\e_m
\big|\p^*\Omega_{\e_m}\cap\phi^{-1}([-1,1])\big|
+\liminf_{m\to\infty}\, \delta_m  e^{\theta}\big|\p^*\Omega_{\e_m}\cap B(x,r_m)\big|\\
&=0,
\end{aligned}
\end{equation*}
which yields a contradiction. Therefore $|\D u_0|(x)=0$ for any point $x\in M$.
This completes the proof of Theorem \ref{result2}.
\end{proof}

\textbf{Acknowledgements}.
The author thanks Zihang Hao for helpful discussions.
The author also thanks the anonymous referees for making valuable
comments and suggestions which helped to improve the presentation of this work.

\

\textbf{Author Contributions} Jia-Yong Wu is the single author of the manuscript.

\

\textbf{Funding}.  This work is partially supported by National Natural Science Foundation of China
(Grant No. 12571144).

\

\textbf{Data availability} No datasets were generated or analysed during the current study.

\

\begin{center}
\textbf{\large{Declarations}}
\end{center}

\

\textbf{Ethical Approval} Not applicable.

\

\textbf{Conflicts of Interest} The author declares no conflicts of interest.

\

\textbf{Competing interests} The author declares no competing interests.


\end{document}